\newtheorem{theorem}{Theorem}[section]
\newtheorem{lemma}[theorem]{Lemma}
\newtheorem{corollary}[theorem]{Corollary}
\newtheorem{proposition}[theorem]{Proposition}
\newtheorem{example}[theorem]{Example}
\newtheorem{remark}[theorem]{Remark}
\numberwithin{equation}{section}
\begin{document}

\begin{center}
{\Large{\bf Two properties of vectors of quadratic forms in Gaussian random variables}}
\normalsize
\\~\\ Vladimir I. Bogachev (Moscow State University)\\
 Egor D. Kosov (Moscow State University)\\
 Ivan Nourdin (Universit\'e de Lorraine)\\
 Guillaume Poly (Universit\'e du Luxembourg)\\
\end{center}

\vskip .2in

{\small
\noindent{\bf Abstract}:
We study distributions of random vectors
whose components are second order polynomials in Gaussian random variables.
Assuming that  the law of such a vector is not absolutely continuous with
respect to Lebesgue measure, we derive some interesting consequences. Our second result gives a
characterization of limits in law for sequences of such vectors.\\

\noindent {\bf Keywords}:  Quadratic forms; second Wiener chaos; convergence in law;
absolute continuity.\\

\noindent {\bf 2000 Mathematics Subject Classification:} 60F05; 60G15; 60H05.
}

\section{Introduction and main results}

Due to many applications in probability and statistics,
quadratic forms or more general second order polynomials
in Gaussian random variables are an object of great importance.
The aim of this paper is to present some new results about distributions  of random vectors
whose components are such quadratic forms.

To be more specific, let us
fix an integer number $k\ge  2$ and let us
 introduce an array $g_{i,j}$, $1\le i\le k$, $j=1,2,\ldots$ of $N(0,1)$ random variables. Assume
 that the variables $g_{i,j}$ are jointly Gaussian
and that ${\rm E}[g_{i,j}g_{i,j'}]=0$ whenever $j\neq j'$
(that is, for any fixed $i$, the sequence $g_{i,1},g_{i,2},\ldots$ is composed of independent $N(0,1)$ random variables).
Let us also consider a random vector $F=(F_1,\ldots,F_k)\in\mathbb{R}^k$ whose components are
``quadratic forms'', that is,
for any $i=1,\ldots,k$,
\begin{equation}\label{F}
F_i = \sum_{j=1}^\infty \lambda_{i,j}(g_{i,j}^2-1)\quad\mbox{for some $\lambda_{i,j}\in\mathbb{R}$ with $\sum_{j=1}^\infty \lambda_{i,j}^2<\infty$}.
\end{equation}

In his seminal paper \cite{Kus}, Kusuoka showed that
the law of  $F=(F_1,\ldots,F_k)$ as above is not absolutely continuous with respect to Lebesgue measure
if and only if there exists a nonconstant polynomial
$P$ on $\mathbb{R}^k$ such that $P(F_1,\ldots,F_k)=0$ almost surely
(see also \cite[Theorem 3.1]{NNP}, where it is further shown that the degree of $P$ can always be
chosen less than or equal to $k2^{k-1}$).
But what can be said about the usual {\it linear} dependence of $F_1,\ldots,F_k$?
One of the goals of this paper is to provide a number
of positive and negative results in the  spirit of the following theorem, which
will be proved (actually in a more general framework) in Section \ref{Sec2}.

\begin{theorem}\label{main1}
Let $k\in\mathbb{N}$ be fixed. Let $F=(F_1,\ldots,F_k)$ be a random vector given by {\rm(\ref{F})}
 such that the law of $F$ is not absolutely continuous.
Then, there exist $k-1$ independent $N(0,1)$ random variables $\eta_1,\ldots,\eta_{k-1}$ and
 $2k-1$ real numbers $a_1,\ldots,a_k,b_1,\ldots,b_{k-1}$
 such that $(a_1,\ldots,a_k)\not=(0,\ldots,0)$ and almost surely
$$
a_1 F_1 + \cdots + a_k F_k = b_1 (\eta_1^2-1) + \cdots+ b_{k-1}(\eta_{k-1}^2-1).
$$
\end{theorem}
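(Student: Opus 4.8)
The plan is to realise each component as a double Wiener--It\^o integral and to reduce the statement to a purely linear-algebraic fact about a pencil of symmetric operators. Let $H$ be the Gaussian Hilbert space spanned by all the $g_{i,j}$, and for each $i$ put $A_i:=\sum_j\lambda_{i,j}\,g_{i,j}\otimes g_{i,j}$, a symmetric Hilbert--Schmidt operator on $H$; then $F_i=I_2(A_i)$ and $\sum_i a_iF_i=I_2(A_a)$, where $A_a:=\sum_i a_iA_i$. Suppose we can produce $a=(a_1,\dots,a_k)\neq 0$ with $\operatorname{rank}(A_a)\le k-1$. Diagonalising the compact symmetric operator, $A_a=\sum_{m=1}^{r}\mu_m\,v_m\otimes v_m$ with $r\le k-1$, $\mu_m\neq 0$ and $\{v_m\}$ orthonormal in $H$, so that $\sum_i a_iF_i=\sum_{m=1}^{r}\mu_m(\eta_m^2-1)$ with $\eta_m:=I_1(v_m)$ independent $N(0,1)$; completing the list to $k-1$ orthonormal vectors of $H$ with vanishing coefficients gives exactly the asserted identity. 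Thus everything reduces to finding such an $a$.

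The first step is to convert the non-absolute-continuity hypothesis into information about the operators $A_i$. The Malliavin matrix of $F$ is $\gamma_{ij}=\langle DF_i,DF_j\rangle_H=4\langle A_iW,A_jW\rangle_H$, hence $\det\gamma$ is a positive multiple of the Gram determinant of $A_1W,\dots,A_kW\in H$; in particular $\det\gamma\ge 0$ and $\det\gamma$ lies in a finite sum of Wiener chaoses. If $\det\gamma>0$ almost surely, the law of $F$ is absolutely continuous (Bouleau--Hirsch). Since a nonzero element of a finite sum of chaoses vanishes with probability zero, the failure of absolute continuity forces $\det\gamma\equiv 0$, i.e. $A_1W,\dots,A_kW$ are linearly dependent for almost every $W$; as this dependence is expressed by the vanishing of polynomials in the Gaussian coordinates of $W$, it in fact holds for \emph{every} $W\in H$. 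So the hypothesis becomes: $A_1W,\dots,A_kW$ are linearly dependent for all $W\in H$.

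It then remains to prove the following linear-algebra lemma: if $A_1,\dots,A_k$ are bounded operators on a Hilbert space $H$ such that $A_1W,\dots,A_kW$ are linearly dependent for every $W\in H$, then $\operatorname{rank}(A_a)\le k-1$ for some $a\neq 0$. I would argue by induction on $k$, the case $k=1$ being $A_1=0$. For the step, either $A_1W,\dots,A_{k-1}W$ are linearly dependent for all $W$ --- and then the inductive hypothesis applied to $A_1,\dots,A_{k-1}$ gives a combination of rank $\le k-2$, to which we append a zero coefficient --- or there is $x_0$ with $A_1x_0,\dots,A_{k-1}x_0$ linearly independent. In the second case the dependence hypothesis at $x_0$ yields $A_kx_0=\sum_{i<k}c_iA_ix_0$, so, subtracting $\sum_{i<k}c_iA_i$ from $A_k$, we may assume $A_kx_0=0$. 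For $x$ near $x_0$ the vectors $A_1x,\dots,A_{k-1}x$ remain independent, and the dependence hypothesis forces $A_kx=\sum_{i<k}c_i(x)A_ix$ with $c_i$ smooth (Cramer's rule on the Gram system) and $c_i(x_0)=0$; differentiating this $H$-valued identity at $x_0$ in an arbitrary direction $v$ and using $c_i(x_0)=0$ gives $A_kv\in\operatorname{span}(A_1x_0,\dots,A_{k-1}x_0)$, whence $\operatorname{rank}(A_k)\le k-1$. Undoing the modification produces the required nonzero $a$.

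The chaos bookkeeping and the Malliavin-matrix reduction are routine; the crux --- and the step I expect to need the most care --- is the linear-algebra lemma, and within it the ``differentiate Cramer's rule at $x_0$'' device in the non-degenerate branch, which is exactly what pushes the rank of the adjusted $A_k$ down to $k-1$. One should also check that nothing is lost when $\dim H=\infty$: each $A_i$ is compact, only finitely many vectors are ever compared at once, and the spectral theorem for compact symmetric operators supplies the diagonalisation used at the end.
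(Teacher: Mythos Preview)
Your proposal is correct and follows essentially the same route as the paper. The paper also translates the hypothesis, via the Malliavin (Bouleau--Hirsch) criterion and the zero--one law for polynomial chaoses, into the statement that the vectors $\widehat{A_1}x,\ldots,\widehat{A_k}x$ are linearly dependent for all (equivalently, a.e.) $x$, and then proves by induction on $k$ that some nontrivial combination of the $A_i$ has rank at most $k-1$ (their Theorem~2.2), from which the $\eta$-representation follows by diagonalising that combination (their Corollary~2.5).

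The only noteworthy difference is in the bookkeeping of the inductive step. The paper differentiates the identity $\widehat{A_{k}}x=\sum_{i<k} c_i(x)\widehat{A_i}x$ at a generic point $x$ and reads off
\[
A_{k}=\sum_{i<k} c_i(x)A_i+\sum_{i<k} D_Hc_i(x)\otimes \widehat{A_i}x,
\]
so that $A_{k}-\sum_{i<k} c_i(x)A_i$ is visibly a sum of $k-1$ rank-one operators. You instead first subtract constants to arrange $c_i(x_0)=0$ and then differentiate at $x_0$, which kills the $c_i(x_0)A_iv$ terms and shows directly that the adjusted $A_k$ has range in $\operatorname{span}(A_1x_0,\ldots,A_{k-1}x_0)$. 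Both computations are the same product-rule identity evaluated at a convenient point; your normalisation just makes the rank bound appear without having to name the tensor pieces. Either way the argument is the same in substance.
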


In order to better understand the significance of this theorem, let us comment on it a little bit.
Let its assumptions hold. In case $k=1$ this is only possible if $F_1=0$.
When $k=2$, it can be shown (see, e.g., \cite[Proposition 3.2]{NNP}) that one can actually choose $b_1=0$,
which means that $F_1$ and $F_2$ are necessarily linearly dependent.
When $k\ge  3$, the situation becomes more difficult.
It is no longer true that the variables $F_i$ are necessarily linearly dependent
when $F$ is not absolutely continuous, as the following simple counterexample
shows. Let $g_1,g_2\sim N(0,1)$ be independent. Set $F_1=g_1^2-1$, $F_2=g_2^2-1$ and
$$
F_3=g_1g_2=\frac{1}{2\sqrt{2}}\biggl[\Bigl(\Bigl(\frac{g_1+g_2}{\sqrt{2}}
\Bigr)^2 - 1\Bigr)
-\biggl(\Bigl(\frac{g_1-g_2}{\sqrt{2}}
\Bigr)^2 - 1\Bigr)\biggr].
$$
The second equality just shows that $F_3$ is indeed of the form (\ref{F}).
It is readily checked that the covariance matrix of $F_1,F_2,F_3$ is not degenerate
(hence $F_1, F_2, F_3$ are  linearly independent), although
the law of $(F_1,F_2,F_3)$ is obviously not absolutely continuous, since
$F_3^2-(F_1+1)(F_2+1)=0$.
Therefore,  the best one can achieve about the linear dependence is precisely
what we state in our theorem.

Let us now discuss the second main result of this paper, which is a description
of the possible limits in law for {\it sequences} of
vectors of quadratic forms.
To be in a position to state a precise result, we first need to introduce some $n$
in the previous framework. So, fix an integer number $k\ge  1$ and
let now $g_{i,j,n}$, $1\le i\le k$, $j,n=1,2,\ldots$, be an array of $N(0,1)$ random variables.
Assume that, for each fixed $n$, the variables $g_{i,j,n}$ are jointly Gaussian
and that ${\rm E}[g_{i,j,n}g_{i,j',n}]=0$ whenever $j\neq j'$.
Let us also consider a sequence of random vectors $F_n=(F_{1,n},\ldots,F_{k,n})\in\mathbb{R}^k$
whose components are again ``quadratic forms'':
for any $i=1,\ldots,k$ and any $n\ge  1$,
$$
F_{i,n} = \sum_{j=1}^\infty \lambda_{i,j,n}(g_{i,j,n}^2-1)
\quad\mbox{for some $\lambda_{i,j,n}\in\mathbb{R}$ with $\sum_{j=1}^\infty \lambda_{i,j,n}^2<\infty$}.
$$
We then have the following theorem, which is our second main result.

\begin{theorem}\label{main2}
Let us assume that $F_n=(F_{1,n},\ldots,F_{k,n})$ converges in law
as $n\to\infty$.
Then the limit law coincides with the distribution of a vector of the form $\eta+F$, where
$F=(F_1,\ldots,F_k)$ is of the form {\rm(\ref{F})} and $\eta=(\eta_1,\ldots,\eta_k)$ is
an independent Gaussian random vector.
\end{theorem}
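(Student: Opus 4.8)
The plan is to argue via tightness, a truncation/decomposition of each $F_{i,n}$ into a "large-coefficient" part and a "small-coefficient" part, and the structure of the second Wiener chaos. First I would reduce to a single probability space: realize all the arrays $g_{i,j,n}$ on an isonormal Gaussian process, so that each component $F_{i,n}$ lives in the second Wiener chaos $\mathcal{H}_2$ (after the shift by the mean $-\sum_j\lambda_{i,j,n}$, which is already $0$ here). Convergence in law of $F_n$ forces $\sup_n {\rm E}[F_{i,n}^2]=2\sup_n\sum_j\lambda_{i,j,n}^2<\infty$ for each $i$ (second moments of a convergent-in-law sequence in a fixed chaos are bounded, since tightness plus hypercontractivity gives uniform integrability of $F_{i,n}^2$). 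Fix $\varepsilon>0$ and split, for each $i,n$, the index set into $A_{i,n}(\varepsilon)=\{j:\ \lambda_{i,j,n}^2\ge\varepsilon\}$ and its complement; write $F_{i,n}=G_{i,n}^\varepsilon+R_{i,n}^\varepsilon$ accordingly. The set $A_{i,n}(\varepsilon)$ has cardinality at most $C/\varepsilon$ uniformly in $n$, so $G_{i,n}^\varepsilon$ is a quadratic form in a bounded number of independent Gaussians; the remainder satisfies ${\rm E}[(R_{i,n}^\varepsilon)^2]=2\sum_{j\notin A_{i,n}(\varepsilon)}\lambda_{i,j,n}^2$, which need not be small, but its \emph{fourth cumulant} is controlled: $\kappa_4(R_{i,n}^\varepsilon)=48\sum_{j\notin A}\lambda_{i,j,n}^4\le 48\,\varepsilon\sum_j\lambda_{i,j,n}^2\le C'\varepsilon$.

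Next I would extract, by a diagonal argument over a countable dense sequence $\varepsilon=\varepsilon_m\downarrow 0$ and by compactness of the relevant parameter spaces, subsequential limits. Along a subsequence: $(G_{1,n}^{\varepsilon},\dots,G_{k,n}^{\varepsilon})$ converges in law (its law is determined by the finitely many eigenvalues $\lambda_{i,j,n}$, $j\in A_{i,n}(\varepsilon)$, which we normalize and pass to the limit, possibly after orthogonal changes of the underlying Gaussians; the limit is again a vector of quadratic forms of the form (\ref{F}) with at most $C/\varepsilon$ terms per component); and $(R_{1,n}^{\varepsilon},\dots,R_{k,n}^{\varepsilon})$ converges in law to some vector $R^\varepsilon$. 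Joint tightness of the pair, plus the fact that the small-coefficient parts have small fourth cumulants, is the engine: by the fourth-moment theorem (Nualart–Peccati), a sequence in a fixed chaos whose fourth cumulants tend to $0$ converges to a Gaussian, and more quantitatively the total-variation (or $d_2$) distance of $R^\varepsilon_n$ to the Gaussian with the same covariance is $O(\sqrt\varepsilon)$. Hence $R^\varepsilon$ is within $O(\sqrt\varepsilon)$ in distribution of a Gaussian vector $\eta^\varepsilon$ whose covariance matrix $\Sigma^\varepsilon$ is itself bounded, hence (sub)convergent as $\varepsilon\to 0$ to some $\Sigma$.

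Then I would pass to the limit in $\varepsilon$. As $\varepsilon_m\downarrow 0$: the "large part" limits $G^{\varepsilon_m}$ form a sequence of vectors of quadratic forms with uniformly bounded second moments, hence (again by hypercontractivity and tightness in $\mathcal{H}_2$) subconverge in law to a vector $F=(F_1,\dots,F_k)$ of the form (\ref{F}) — here one uses that a limit in law of elements of the second chaos with bounded second moments stays in the second chaos and, componentwise, is a quadratic form, so of the stated shape; and the "small part" Gaussians $\eta^{\varepsilon_m}$ converge in law to a Gaussian $\eta$ with covariance $\Sigma$. The decomposition $F_{i,n}=G_{i,n}^{\varepsilon_m}+R_{i,n}^{\varepsilon_m}$, the estimate $d(R^{\varepsilon_m},\eta^{\varepsilon_m})=O(\sqrt{\varepsilon_m})$, and independence of the large and small parts in each fixed $n$ (they depend on disjoint sets of independent Gaussians, for a fixed $i$; across different $i$ the joint law is only constrained, but one keeps track of the whole covariance structure in the subsequential limit and realizes $\eta$ as \emph{independent} of $F$ on an enlarged space — this is legitimate because the asymptotic cross-covariances between $G^{\varepsilon}$ and $R^{\varepsilon}$ vanish: a quadratic form and a Gaussian in the same chaos are uncorrelated, and $R^\varepsilon$ is asymptotically Gaussian) show that the original limit law equals the law of $\eta+F$. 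The main obstacle I anticipate is precisely this last bookkeeping: proving that in the limit the Gaussian part $\eta$ and the chaos part $F$ can be taken \emph{independent} rather than merely jointly Gaussian-plus-chaos with some coupling. The resolution is that $R^\varepsilon_n$ converges to a Gaussian, a Gaussian element of a chaos is uncorrelated with every genuine element of the same chaos, uncorrelated jointly Gaussian-and-chaos pieces in the second chaos are in fact independent (a Gaussian chaos element that is uncorrelated with the quadratic form decouples into an independent direction of the isonormal process), and this structural fact survives passage to the limit. A secondary technical point is justifying the diagonal extraction cleanly — for that I would work throughout with a fixed separable metric for convergence in law on $\mathbb{R}^k$ and invoke Prokhorov's theorem at each stage.
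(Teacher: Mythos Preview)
Your overall strategy---split each $F_{i,n}$ into a large-coefficient part (converging to a genuine quadratic form) and a small-coefficient tail (converging to a Gaussian via a fourth-moment/CLT argument)---is the same as the paper's. The paper orders the $|\lambda_{i,j,n}|$ decreasingly, extracts pointwise limits $\mu_{i,j}$ by diagonalization, and cuts at a moving index $D_{i,n}\to\infty$ chosen so that $\sum_{j\le D_{i,n}}(\lambda_{i,j,n}-\mu_{i,j})^2\to 0$; this avoids your double limit in $\varepsilon$ and $n$, but the architecture is identical.

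The genuine gap is exactly where you flag it: independence of $\eta$ and $F$. Your proposed resolution is wrong. You assert that ``a Gaussian chaos element that is uncorrelated with the quadratic form decouples into an independent direction of the isonormal process.'' But take $\eta=g_1$ and $F=g_1^2-1$: they are uncorrelated (orthogonal Wiener chaoses) yet certainly not independent. Orthogonality of first and second chaos gives you vanishing covariances for free, and nothing more; it does not rule out such couplings appearing as joint limits of $(G_n^\varepsilon,R_n^\varepsilon)$. Since for $i\ne i'$ the variables $g_{i,j,n}$ and $g_{i',j',n}$ may be correlated, $G_{i,n}^\varepsilon$ and $R_{i',n}^\varepsilon$ are genuinely entangled, and you have no mechanism that forces the limiting coupling to factor.

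The paper supplies the missing idea, and it is simple once seen. Instead of tracking the pair $({\bf G}_n,{\bf R}_n)$ directly, track the pair
\[
\bigl(g_{1,1,n},\ldots,g_{d,m,n};\ {\bf V}_n\bigr),
\]
i.e.\ the \emph{raw underlying Gaussians} up to a fixed level $m$ together with the tail vector. The first block is first-chaos (hence trivially Gaussian), and each component of the second block converges marginally to a Gaussian by your own CLT step. The Peccati--Tudor theorem then upgrades these marginal Gaussian limits to \emph{joint} Gaussian convergence, with limiting covariance equal to the limit of the covariance matrices. Since first and second chaos are orthogonal, the cross-covariances between the $g_{i,j,n}$ and the components of ${\bf V}_n$ are identically zero for every $n$; the limiting Gaussian vector therefore has block-diagonal covariance, so the limit $\widehat{\bf g}$ of ${\bf V}_n$ is independent of the limit ${\bf g}_\infty$ of the array $(g_{i,j,n})$. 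The large-coefficient part is a continuous function of finitely many $g_{i,j,n}$'s, so its limit is a function of ${\bf g}_\infty$ and hence independent of $\widehat{\bf g}$; a final $m\to\infty$ step closes the argument. The point is: do not try to deduce independence from the pair (second-chaos, asymptotically-Gaussian) directly---carry the first-chaos generators along and let Peccati--Tudor produce a jointly Gaussian limit with block-diagonal covariance, where independence is automatic.
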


When $k=1$ (the one-dimensional case), Theorem \ref{main2} was actually shown by Arcones \cite{A99} (see also
Sevastyanov \cite{S61} or Nourdin and Poly \cite{NP}).
At first glance one could be tempted to think that, in order to show Theorem
\ref{main2} in the general case $k\ge  2$, it suffices to apply the Arcones
 theorem along with the Cram\'er--Wold theorem.
But if we try to implement this strategy, we face a crucial issue. Indeed, consider
a random vector $G=(G_1,\ldots,G_k)$ in $\mathbb{R}^k$ and assume that each linear combination of the
variables $G_i$ is a
quadratic form in Gaussian random variables;
how can we deduce from this  that $G$ has the same law as  $F$ given by (\ref{F})?

The paper is organized as follows. Section 2 contains our results related to Theorem \ref{main1}, whereas
the proof of Theorem \ref{main2} is performed in Section 3.

\section{Second order polynomial mappings with linearly dependent Malliavin derivatives}\label{Sec2}

A basic fact of the Malliavin calculus is that finitely many
Sobolev functions $F_1,\ldots,F_n$ on a space with a Gaussian $\mu$
have a joint density of distribution provided that their Malliavin
gradients $D_HF_1,\ldots, D_HF_n$ along the Cameron--Martin space $H$
are linearly independent almost everywhere.
For this reason diverse sufficient conditions for such independence are of interest,
which leads to a natural question about consequences of the alternative situation
where the gradients are linearly dependent on a positive measure set.
One can hardly expect a useful general characterization of this, but
the situation may be more favorable for various special classes of functions,
in particular, for measurable polynomials.
One of the first results in this direction (already mentioned above)  was obtained by Kusuoka~\cite{Kus}.
This result has been recently extended in \cite{NNP}
as follows: the measure induced by $(F_1,\ldots,F_n)$ is not absolutely
continuous precisely when there is a polynomial dependence between
$F_1,\ldots,F_n$, i.e. there is a nonzero polynomial $\psi$ such that
$\psi(F_1,\ldots,F_n)=0$.
But what can be said about usual linear dependence of $F_1,\ldots,F_n$?
Of course, in general there might be no such dependence
even in the finite-dimensional case.
However, there are cases where the linear dependence of derivatives of
mappings on $\mathbb{R}^d$ on a positive
measure set yields the usual linear dependence of the mappings themselves.
This is obviously the case for linear functions and can be also verified
for quadratic forms. The goal of this section is to present a number
of positive and negative results of this sort.
We prove that if $k$ measurable linear mappings $A_1,\ldots,A_k$
on a  space with a Gaussian measure $\mu$ are such that the vectors
$A_1x,\ldots,A_kx$ are linearly dependent for every $x$ in a set of positive
$\mu$-measure, then there is a measurable linear operator $D$ of rank $k-1$ such
that the operators $A_1,\ldots,A_k, D$ are linearly dependent, i.e.
$D=c_1A_1+\cdots+ c_kA_k$ with some numbers $c_1,\ldots,c_k$.
In general, the rank of $D$ cannot be made smaller.
However, if $k=2$ and $A_1$ and $A_2$ are the
second derivatives along $H$ of some second order polynomials, then
the above assertion is true with $D=0$, that is, $A_1$ and $A_2$ are linearly dependent.

Let $\mu$ be a centered Radon Gaussian measure on a locally
convex space $X$ with the topological dual $X^*$, i.e., every functional
$f\in X^*$ is a centered Gaussian random variable on $(X,\mu)$.
Basic concepts and facts related to Gaussian measures can be found in \cite{B1} and \cite{B2}.
We recall some of them.

The Cameron--Martin space of $\mu$ is the
set
$$
H=\{h\in X\colon\, |h|_H<\infty \},
$$
where
$$
|h|_H:=\sup \bigl\{l(h)\colon\ l\in X^*, \, \|l\|_{L^2(\mu)}\le 1\bigr\}.
$$
It is known that the closure of $H$ in $X$ has full measure, so we shall assume throughout
that $H$ is dense. Such a measure $\mu$ is called nondegenerate. An equivalent
condition is that the distribution of every nonzero  $f\in X^*$ has a density.

Let $X_\mu^*$ denote the closure of $X^*$ in $L^2(\mu)$. The elements of $X_\mu^*$ are
called measurable linear functionals on $X$. Such a functional admits a version
that is linear on all of $X$ in the usual algebraic  sense. Conversely,
every $\mu$-measurable function that is algebraically  linear belongs to the space $X_\mu^*$.

It is known that $H$ consists of all vectors $h$ such that $\mu$ is equivalent to its
shift $\mu_h$ defined by $\mu_h(B)=\mu(B-h)$.
It is also known that
every vector $h\in H$ generates a measurable linear functional
$\widehat{h}$ on $X$ such that
$$
(f,\widehat{h})_{L^2(\mu)}=f(h),\quad f\in X^{*}.
$$
Every element in $X_\mu^*$ can be represented in this way,
so that the mapping $h\mapsto \widehat{h}$ is one-to-one.
It is known that
$H$ is a separable Hilbert space with the inner product
$$
(h,k)_H:=(\widehat{h},\widehat{k})_{L^2(\mu)}.
$$

If $\{e_n\}$ is an orthonormal basis in $H$, then
$$
\widehat{h}(x)=\sum_{n=1}^\infty (h,e_n)_H \widehat{e}_n(x),
$$
where the series converges in $L^2(\mu)$ and almost everywhere.
In the case where $\mu$ is the standard Gaussian measure on $\mathbb{R}^\infty$
(the countable power of $\mathbb{R}$ or the space of all real
sequences $x=(x_n)$) and
$\{e_n\}$ is the standard basis in $H=l^2$, we have $\widehat{e}_n(x)=x_n$ and
$$
\widehat{h}(x)=\sum_{n=1}^\infty h_nx_n.
$$
Given a bounded  operator $A\colon\, H\to H$ let $\widehat{A}$ denote the associated
measurable linear operator on~$X$, i.e., a measurable linear mapping from $X$ to $X$
such that $\widehat{h}(\widehat{A}x)=\widehat{A^*h}(x)$ for every $h\in H$.
This operator can be defined by the formula
$$
\widehat{A}x=\sum_{n=1}^\infty \widehat{e}_n(x) Ae_n,
$$
where the series converges in $X$ for almost all $x$ (which is ensured by the Tsirelson theorem).

If $A$ is a Hilbert--Schmidt operator (and only in this case)
the operator $\widehat{A}$ takes values in~$H$. Then
$(\widehat{A}x,h)_H=\widehat{A^{*}h}(x)$ for every $h\in H$
and the above series converges in $H$ for almost all $x$.

The space $L^2(\mu)$ can be decomposed in the orthogonal sum $\bigotimes_{k=0}^\infty \mathcal{X}_k$ of
mutually orthogonal closed subspaces $\mathcal{X}_k$ constructed as follows. Letting
$E_k$ be the closure in $L^2(\mu)$ of polynomials of the form
$f(\xi_1,\ldots,\xi_n)$, where $f$ is a polynomial of order $k$ on $\mathbb{R}^n$ and $\xi_i\in X_\mu^*$,
the space $\mathcal{X}_k$ is the orthogonal complement of $E_{k-1}$ in $E_k$, $E_0=\mathcal{X}_0$ is the
one-dimensional space of constants. For example, $\mathcal{X}_1=X_\mu^*$.
Functions in $E_k$ are called measurable
polynomials of order~$k$.
The elements of $\mathcal{X}_k$ are also
referred to as elements of the homogeneous
Wiener chaos of order~$k$ (although they are not homogeneous polynomials
excepting the case $k=1$).
It has been recently shown in \cite{AY} that the class $E_k$ coincides with the set
of $\mu$-measurable functions on $X$ which admit versions that are polynomials
of order $k$ in the usual algebraic sense (an algebraic polynomial of order $k$
is a function whose restriction to every straight line is a polynomial
of order~$k$).

The elements of $\mathcal{X}_2$ admit the following relatively simple
representation: for every $f\in \mathcal{X}_2$ there are numbers $c_n$ and an orthonormal sequence
$\{\xi_n\}\subset X_\mu^*$ such that $\sum_{n=1}^\infty c_n^2<\infty$ and
\begin{equation}\label{repr}
f=\sum_{n=1}^\infty c_n(\xi_n^2-1),
\end{equation}
where the series converges almost everywhere and in $L^2(\mu)$.
For any $k$, the elements of $\mathcal{X}_k$ can be represented by series in Hermite polynomials of order $k$
in the variables $\xi_n$. Also, in the case of the classical Wiener space, they can be written as
multiple Wiener integrals.
In that case, $X=C[0,1]$ or $X=L^2[0,1]$, $\mu$ is the Wiener measure,
its Cameron--Martin space $H$ is the space of all absolutely continuous functions $h$ on $[0,1]$
such that $h(0)=0$ and $h'\in L^2[0,1]$; $(u,v)_H=(u',v')_{L^2}$. Every element in $X_\mu^*$ can be written
as the Wiener stochastic integral
$$
\xi(x)=\int_0^1 u(t) dx(t),\quad u\in L^2[0,1].
$$
Letting
$$
h(t)=\int_0^t u(s)ds,
$$
we have $\xi=\widehat{h}$. Similarly, any element in $\mathcal{X}_2$ can be written as the double Wiener integral
$$
f(x)=\int\int q(t,s) dx(t) dx(s),
$$
where $q\in L^2([0,1]^2)$. However, the first integral $\xi(s,x)=\int q(t,s) dx(t)$
must be an adapted process (so that the second integral is already an It\^o integral
of an adapted process with respect to the Wiener process), i.e., for every $s$,
the random variable $\xi(s,x)$ must be
measurable with respect to the $\sigma$-field generated by the variables $x(t)$ with $t\le s$; for this reason
it is required that $q(t,s)=0$ whenever $t>s$.

Every function $f\in \mathcal{X}_2$ has the Malliavin gradient $D_Hf$ along $H$ that is a measurable
linear operator from $X$ to $H$; for $f$ of the form (\ref{repr}) we have
$$
D_Hf(x)=2\sum_{n=1}^\infty c_n\xi_n(x)e_n,
$$
where $\{e_n\}$ is an orthonormal sequence in $H$ such that $\xi_n=\widehat{e}_n$; without loss of generality
we may assume that $\{e_n\}$ is a basis in $H$.
Therefore, the second derivative $D_H^2f(x)$ is the symmetric Hilbert--Schmidt operator
with an eigenbasis $\{e_n\}$ and the corresponding eigenvalues~$\{2c_n\}$.
So the situation is similar to the case of $\mathbb{R}^d$, where the function
$Q(x)=\sum_{n=1}^d c_n (x_n^2-1)$ has the gradient $\nabla Q(x)=2\sum_{n=1}^d c_n x_ne_n$.
In the coordinate-free form $Q(x)=(Ax,x)-{\rm trace}\, A$, where $A$ is a symmetric operator,
$\nabla Q(x)=2Ax$.
The only difference is that the series of $c_n \xi_n^2$ does not have a separate meaning
unless the series of $|c_n|$ converges, so a typical element of $\mathcal{X}_2$ just formally
looks like ``a quadratic form minus a constant''.

It is clear that $D_Hf=2\widehat{D_H^2f}$. Conversely, for any symmetric Hilbert--Schmidt operator $A$
that has an eigenbasis $\{e_n\}$ and eigenvalues~$\{a_n\}$, there is $f\in \mathcal{X}_2$ of the
form (\ref{repr}) such that $A=D^2_Hf(x)$ and $\widehat{A}x=D_Hf(x)$.

It is readily seen that in the case of the classical Wiener space and an element $f\in \mathcal{X}_2$
represented by means of a double Wiener integral with a kernel~$q$ one has
$$
(D_Hf(x),h)_H=\int_0^1 \int_0^1 [q(t,s)+q(s,t)]h'(t) dt dx(s), \quad h\in H .
$$
For the second derivative we have
$$
(D_H^2f(x)h_1,h_2)_H=\int_0^1 \int_0^1 [q(t,s)+q(s,t)]h_1'(t)h_2'(s) dt ds,
\quad h_1, h_2\in H .
$$

Now we may ask whether two elements $f$ and $g$ in $\mathcal{X}_2$ are linearly dependent if
their gradients $D_Hf(x)$ and $D_Hg(x)$ are linearly dependent for almost all~$x$;
what about $k$ elements in $\mathcal{X}_2$? In terms of the second derivatives along $H$
our question is this: if two symmetric Hilbert--Schmidt operators $A$ and $B$ on $H$ are such
that $\widehat{A}x$ and $\widehat{B}x$ are linearly dependent for almost all~$x$,
is it true that the operators $A$ and $B$ are linearly dependent?
The same question can be asked about not necessarily symmetric operators on~$H$ and also
about more general measurable linear operators (as well as about more than two such objects).

For example, given two quadratic forms $(Ax,x)$ and $(Bx,x)$
on $\mathbb{R}^d$ with symmetric operators $A$ and $B$, the linear dependence of the forms
(or, what is the same, the linear dependence of the corresponding
elements $(Ax,x)-{\rm trace}\, A$ and $(Bx,x)-{\rm trace}\, B$ of $\mathcal{X}_2$)
is equivalent to the linear dependence of the operators, and both follow from
the condition that $Ax$ and $Bx$ are linearly dependent for $x$ in a positive measure set.
We are concerned with infinite-dimensional generalizations of this fact.

\begin{lemma}
Two functions $f,g\in \mathcal{X}_2$ are linearly dependent precisely when
the operators $D_H^2f$ and $D_H^2g$ on $H$ are linearly dependent.
\end{lemma}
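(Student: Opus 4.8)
The plan is to recognize that the lemma merely records the fact that the second-derivative map $f\mapsto D_H^2f$ is a \emph{linear injection} from $\mathcal{X}_2$ into the space of symmetric Hilbert--Schmidt operators on $H$; granting linearity and injectivity, the equivalence of the two notions of linear dependence follows automatically. So I would organize the argument around establishing these two properties.

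First I would dispose of the easy implication. Since the second Malliavin derivative is linear in its argument, $D_H^2(\alpha f+\beta g)=\alpha D_H^2f+\beta D_H^2g$ for all scalars $\alpha,\beta$. Hence, if $\alpha f+\beta g=0$ almost surely with $(\alpha,\beta)\neq(0,0)$, then $\alpha D_H^2f+\beta D_H^2g=0$, i.e.\ the operators $D_H^2f$ and $D_H^2g$ are linearly dependent.

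For the converse, suppose $\alpha D_H^2f+\beta D_H^2g=0$ with $(\alpha,\beta)\neq(0,0)$ and set $h:=\alpha f+\beta g\in\mathcal{X}_2$. By linearity $D_H^2h=0$, so it suffices to show that $D_H^2h=0$ forces $h=0$. Here I would invoke the representation $(\ref{repr})$: write $h=\sum_n c_n(\xi_n^2-1)$ with $\{\xi_n\}=\{\widehat{e}_n\}$ orthonormal and $\sum_n c_n^2<\infty$. As recalled before the statement, $D_H^2h$ is then the symmetric Hilbert--Schmidt operator with eigenbasis $\{e_n\}$ and eigenvalues $\{2c_n\}$; if this operator vanishes, all of its eigenvalues vanish, so $c_n=0$ for every $n$, and therefore $h=0$ in $L^2(\mu)$. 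This gives $\alpha f+\beta g=0$, so $f$ and $g$ are linearly dependent.

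I do not expect a genuine obstacle: the whole content is already contained in the structural facts about $\mathcal{X}_2$ recalled above, and the only point needing care is that the coefficients $c_n$ in $(\ref{repr})$ are exactly (half) the eigenvalues of $D_H^2h$, so that the kernel of $f\mapsto D_H^2f$ is trivial. Note also that the same reasoning applies verbatim to any finite collection $f_1,\ldots,f_k\in\mathcal{X}_2$, which is the form in which it will be used later.
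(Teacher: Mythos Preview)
Your proof is correct. Both you and the paper reduce the nontrivial direction to showing that $D_H^2h=0$ forces $h=0$ for $h\in\mathcal{X}_2$, but you establish this differently. You invoke the spectral representation $(\ref{repr})$ and read off that the eigenvalues $2c_n$ of $D_H^2h$ coincide with the coefficients $c_n$ (up to a factor), so $D_H^2h=0$ kills all $c_n$ and hence $h$. The paper instead ``integrates up'': from $D_H^2h=0$ it deduces that $D_Hh$ is a constant vector $h_0\in H$, whence $h=\widehat{h_0}+c$ is an affine function, and then uses the orthogonality of $\mathcal{X}_2$ to $E_1$ to force $h_0=0$ and $c=0$. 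Your route is slightly more direct given the structural facts already recorded before the lemma; the paper's route is representation-free and makes explicit why membership in $\mathcal{X}_2$ (as opposed to $E_2$) is exactly what rules out a nonzero affine remainder.
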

\begin{proof}
If $\alpha  f(x)+\beta g(x)=0$ for some numbers $\alpha,\beta$, then obviously
$\alpha  D_H^2f+\beta D_H^2g=0$. Suppose the latter equality holds.
This means that the $H$-valued mapping $\alpha D_Hf+\beta D_Hg$ has zero derivative along~$H$.
It follows that $\alpha D_Hf(x)+\beta D_Hg(x)$ is a constant vector $h_0\in H$.
Therefore, $\alpha  f(x)+\beta g(x)=\widehat{h_0}(x)+c$, where $c$ is a constant.
Since $f$ and $g$ are orthogonal in $L^2(\mu)$ to all elements in $E_1$, we conclude
that $c=0$ and $h_0=0$.
\end{proof}

We recall the following zero-one law: for every $\mu$-measurable linear
subspace $L\subset X$ one has either $\mu(L)=0$ or $\mu(L)=1$.
There is also a similar zero-one law for measurable polynomials~$\psi$:
the set $\{x\colon\, \psi(x)=0\}$ has measure either $0$ or $1$,
see \cite[Theorem 3.2.10 and Proposition 5.10.10]{B1}.

\begin{theorem}\label{t1}
Let $A_1,\ldots,A_k$ be linearly independent Hilbert--Schmidt operators
on $H$. Then either the vectors $\widehat{A_1}x,\ldots,\widehat{A_k}x$
are linearly independent a.e. or there is a finite-dimensional bounded operator $D$
of rank at most $k-1$ that is a nontrivial linear combination of $A_1,\ldots,A_k$.
\end{theorem}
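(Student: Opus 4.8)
The plan is to reduce the statement to a pointwise, finite-dimensional fact about symmetric matrices and then feed it back through the zero-one laws for measurable linear subspaces. First I would fix an orthonormal basis $\{e_n\}$ of $H$ in which all the considerations can be carried out via coordinates, writing $\widehat{A_i}x=\sum_n \widehat{e}_n(x)A_ie_n$. The key reformulation is this: the vectors $\widehat{A_1}x,\ldots,\widehat{A_k}x$ are linearly dependent on a set of positive measure if and only if there is a measurable choice of coefficients $(c_1(x),\ldots,c_k(x))\neq 0$ with $\sum_i c_i(x)\widehat{A_i}x=0$. I would first argue that one may assume the coefficients can be taken \emph{constant} on a set of positive measure. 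This is where the zero-one law enters: the set $\{x\colon \widehat{A_1}x,\ldots,\widehat{A_k}x \text{ are linearly dependent}\}$ is a measurable linear subspace (it is stable under scaling $x\mapsto tx$ since each $\widehat{A_i}$ is linear, and one checks it is also additive using that linear dependence of the $k$ vectors at $x$ and at $y$ forces it at $x+y$ when $k$ vectors live in a space where... here I need to be careful). So by the zero-one law this set has measure $0$ or $1$; assuming we are not in the a.e.-independent case, it has full measure, i.e. $\widehat{A_1}x,\ldots,\widehat{A_k}x$ are linearly dependent for almost every $x$.

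Next, from a.e. linear dependence I want to extract a \emph{single} nonzero vector of coefficients $(c_1,\ldots,c_k)$, not depending on $x$, such that $D:=c_1A_1+\cdots+c_kA_k$ has rank at most $k-1$. The idea is to look at the maximal rank. Let $r$ be the maximum over $x$ (a.e.) of $\dim\operatorname{span}\{\widehat{A_1}x,\ldots,\widehat{A_k}x\}$; since the vectors are a.e. dependent, $r\le k-1$. On the set where this maximum is attained — which has positive measure, in fact we can pin down $r$ of the indices, say after relabeling $\widehat{A_1}x,\ldots,\widehat{A_r}x$ are independent there — there are measurable functions $c_{r+1}(x),\ldots,c_k(x)$ (obtained by Cramer's rule from finite sub-determinants, hence measurable) expressing $\widehat{A_k}x$, say, as $\sum_{i\le r}c_i(x)\widehat{A_i}x$. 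I would then show these coefficient functions are a.e. constant: each $c_i$ is a ratio of two continuous polynomial (indeed, multilinear determinant) expressions in the coordinates $\widehat{e}_n(x)$, restricted to a measurable linear subspace, and the zero-one law for measurable polynomials forces such a ratio to be a.e. constant. Then $\widehat{D}x=0$ a.e. for $D=A_k-\sum_{i\le r}c_iA_i$, a nontrivial combination; and $\widehat{D}x=0$ a.e. implies $D=0$ as a Hilbert–Schmidt operator — but that contradicts linear independence of the $A_i$ unless we instead only ask that $D$ have \emph{small rank}, so the correct target is: take the combination $\widehat{D}$ whose range is a.e. of dimension $\le k-1$, and observe $\widehat{D}$ maps into a fixed finite-dimensional subspace.

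The honest mechanism for the rank bound is the following: once we know that a.e. $x$ the span of $\widehat{A_1}x,\ldots,\widehat{A_k}x$ has dimension $\le k-1$, consider the operator $\widehat{A}\colon X\to H^{k}$, $x\mapsto(\widehat{A_1}x,\ldots,\widehat{A_k}x)$, equivalently an operator $H\to H^k$ built from the block $(A_1,\ldots,A_k)$. Its image, being a.e. contained in a $(k-1)$-dimensional subspace varying with $x$, forces a nontrivial linear relation among the \emph{rows}, i.e. among the $A_i$ themselves, modulo an operator of rank $\le k-1$ accounting for the variation — this is exactly the statement that the joint kernel considerations localize. Concretely, I would pick a countable dense set, use that a finite-rank truncation $A_i^{(N)}=P_NA_iP_N$ ($P_N$ the projection onto $\operatorname{span}\{e_1,\ldots,e_N\}$) satisfies the same a.e.-dependence on $\mathbb{R}^N$, invoke the finite-dimensional matrix lemma (linear dependence of $A_1^{(N)}x,\ldots,A_k^{(N)}x$ for $x$ in a positive-measure set of $\mathbb{R}^N$ yields a nonzero $(c_1,\ldots,c_k)$ with $\operatorname{rank}(\sum c_iA_i^{(N)})\le k-1$), and then pass to the limit $N\to\infty$ using compactness of the unit sphere in $\mathbb{R}^k$ to get a single nonzero coefficient vector $c$ working for all $N$, whence $\operatorname{rank}(\sum_i c_iA_i)\le k-1$ and $D=\sum_i c_iA_i$ is the desired finite-dimensional operator.

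I expect the main obstacle to be the finite-dimensional lemma itself — showing that if $k$ symmetric (or general) matrices $A_1,\ldots,A_k$ are such that $A_1x,\ldots,A_kx$ are linearly dependent on a set of positive Lebesgue measure in $\mathbb{R}^d$, then some nontrivial combination $\sum c_iA_i$ has rank $\le k-1$ — and, equally, the compactness argument that makes the coefficient vector $c$ independent of the truncation level $N$ (one must rule out that the only good $c$'s drift off as $N\to\infty$; this should follow since the set of admissible $c$ at level $N$ is a nonempty closed cone that decreases with $N$, and a nested intersection of nonempty compact sets on the sphere is nonempty). The passage from "positive measure" to "a.e." via the zero-one law is routine once the relevant set is identified as a measurable linear subspace, and the measurability of the Cramer-rule coefficients is standard.
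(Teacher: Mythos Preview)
Your proposal has a genuine gap: you never land on the mechanism that actually converts the \emph{$x$-dependent} relation $\widehat{A_{k+1}}x=\sum_{i\le k} c_i(x)\widehat{A_i}x$ into a \emph{constant-coefficient} relation among the operators. Two of your three attempts fail outright. First, the dependence set is \emph{not} a measurable linear subspace in general (take $A_1=I$ and $A_2=\mathrm{diag}(1,2)$ on $\mathbb{R}^2$: the set where $A_1x,A_2x$ are dependent is the union of the coordinate axes); the correct zero-one law here is the one for \emph{polynomials}, via the Gram determinant $\det\bigl((\widehat{A_i}x,\widehat{A_j}x)_H\bigr)=0$. Second, the claim that the Cramer-rule coefficients $c_i(x)$ are a.e.\ constant is false: the zero-one law for polynomials says nothing about ratios of polynomials being constant, and indeed in the $2\times 2$ example above $c(x)=x_1/x_1=1$ on one axis and $c(x)=2x_2/x_2=2$ on the other. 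Your own realization that constant $c_i$ would force $D=0$ and contradict linear independence should have been a signal that this route is blocked.

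Your third attempt, truncate and use compactness, might be salvageable, but as written it simply relocates the entire difficulty into the ``finite-dimensional lemma'' which you leave unproved; that lemma \emph{is} the theorem in finite dimensions, and needs the same idea. The paper's device, which works uniformly in finite and infinite dimensions, is to \emph{differentiate along $H$}: clear denominators to get a polynomial identity such as $Q(x)\widehat{A_{k+1}}x=\sum_i P_i(x)\widehat{A_i}x$, then apply $D_H$. Since $D_H\widehat{A_i}=A_i$, one obtains, at a.e.\ fixed $x$, the operator identity
\[
A_{k+1}=\sum_{i=1}^{k} c_i(x)A_i + D(x),
\]
where $D(x)$ is a sum of $k$ rank-one operators of the form $u\otimes \widehat{A_i}x$ coming from the product rule. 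Fixing any single good $x$ gives constants $c_i:=c_i(x)$ and a nontrivial combination $A_{k+1}-\sum c_iA_i$ of rank at most~$k$ (at most $k-1$ in the base case $k=2$, then induct). The induction also handles your worry about which subset of indices is independent. This differentiation step is the missing idea in your proposal.
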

\begin{proof}
Let $k=2$.
We may assume that $\widehat{A_1}x\not=0$ a.e., since otherwise
$\widehat{A_1}x=0$ a.e. by the zero-one law, hence $A_1=0$.
Suppose that $\widehat{A_1}x,\widehat{A_2}x$ are linearly dependent
on a positive measure set.
By the zero-one law for polynomials they are linearly dependent a.e.,
because the set $Z$ of points $x$ at which
$\widehat{A_1}x$ and $\widehat{A_2}x$ are linearly dependent is characterized
by the equality
$(\widehat{A_1}x, \widehat{A_2}x)_H^2
-(\widehat{A_1}x, \widehat{A_1}x)_H(\widehat{A_2}x, \widehat{A_2}x)_H=0$.
Hence there is a function $k$ on $X$ such that
$$
\widehat{A_2}x=k(x)\widehat{A_1}x \ \hbox{a.e.}
$$
Then $k(x)=(\widehat{A_1}x, \widehat{A_2}x)_H/|\widehat{A_1}x|_H^2$.
The functions
$$
P(x)=(\widehat{A_1}x, \widehat{A_2}x)_H,
\quad
Q(x)=|\widehat{A_1}x|_H^2
$$
are obviously differentiable along $H$. Note that the derivatives of $\widehat{A}_1$
and $\widehat{A}_1$ along $H$ are just the initial operators $A_1$ and $A_2$, respectively.
Differentiating the equality
$$
Q(x)\widehat{A_2}x=P(x)\widehat{A_1}x,
$$
we get  almost everywhere
$$
Q(x)A_2+ D_HQ(x)\otimes \widehat{A_2}x=P(x)A_1+ D_H P(x)\otimes \widehat{A_1}x,
$$
where for any two vectors $u,v\in H$ the operator $u\otimes v$ is
defined by $u\otimes v(h)=(u,h)_H v$.
It should be noted that differentiating is possible almost everywhere, since
for almost every $x\in Z$ the set $Z$ contains all straight lines $x+\mathbb{R}h$
for every vector $h\in H$ that is a linear combination of the elements of
a fixed orthonormal basis $\{e_i\}$ in $H$ with rational coefficients.

Let $x$ be any point where
the previous equality holds and $Q(x)\not=0$.
Then
$$
A_2 =k(x)A_1+ Q(x)^{-1}[D_H P(x)-k(x)D_HQ(x)] \otimes \widehat{A_1}x.
$$
Setting $c:=k(x)$ and
$$
D:=Q(x)^{-1}[D_H P(x)-k(x)D_HQ(x)] \otimes \widehat{A_1}x,
$$
we arrive at the identity $A_2=cA_1+D$, where $D$ has rank at most~$1$.

Suppose now that our assertion is true for some $k>2$ and consider linearly
independent operators $A_1,\ldots,A_k,A_{k+1}$. We may assume that
$\widehat{A_1}x,\ldots,\widehat{A_k}x$ are linearly independent a.e.
If $\widehat{A_1}x,\ldots,\widehat{A_{k+1}}x$ are linearly dependent on a positive
measure set, then they are linearly dependent a.e., hence there are functions
$c_1,\ldots,c_k$ on $X$ such that a.e.
$$
\widehat{A_{k+1}}x=c_1(x)\widehat{A_{1}}x+\cdots+c_k(x)\widehat{A_{k}}x.
$$
It is readily verified again that the functions $c_i$ (which can be found explicitly)
are differentiable along $H$ a.e.
Differentiating we arrive at the equality a.e.
$$
A_{k+1}=c_1(x)A_1+\cdots+c_k(x)A_k+D(x),
$$
where $D(x)$ is a sum of $k$ one-dimensional operators, hence has rank at most $k$.
It remains to take $x$ as a common point of differentiability of $c_1,\ldots,c_k$.
\end{proof}

\begin{remark}
{\rm
It is known (see, e.g., \cite{Y}) that for any measurable linear
operator $T$
from a separable Fr\'echet space $X$ with a Gaussian measure $\mu$
(actually, not necessarily Gaussian)
to a separable Banach space $Y$
there is a separable reflexive Banach space $E$ compactly embedded into $X$
and having full measure such that $T$ coincides almost everywhere with a bounded
linear operator from $E$ to $Y$. Using  this result, one can reduce the previous theorem
(in the case  of Fr\'echet spaces)
to the case of bounded operators.
}\end{remark}

\begin{corollary}
Let $k=2$ and let $A_1$ and $A_2$ be symmetric. If $\widehat{A_1}x$ and
$\widehat{A_2}x$ are linearly dependent for vectors $x$ in a set of positive measure,
then $A_1$ and $A_2$ are linearly dependent.
\end{corollary}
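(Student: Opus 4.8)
We prove the corollary by contradiction: assume that $\widehat{A_1}x$ and $\widehat{A_2}x$ are linearly dependent on a set of positive measure, but that $A_1$ and $A_2$ are linearly \emph{independent}. The set where $\widehat{A_1}x$ and $\widehat{A_2}x$ are dependent is the zero set of the measurable polynomial $(\widehat{A_1}x,\widehat{A_2}x)_H^2-|\widehat{A_1}x|_H^2\,|\widehat{A_2}x|_H^2$, so by the zero-one law for polynomials these two vectors are in fact linearly dependent a.e.; in particular they are not linearly independent a.e. Since $A_1,A_2$ are linearly independent, Theorem~\ref{t1} then forces its second alternative: there are scalars $(c_1,c_2)\ne(0,0)$ and an operator $D$ of rank at most one with $c_1A_1+c_2A_2=D$. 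Relabeling $A_1,A_2$ if necessary (the hypotheses are symmetric in them), we may assume $c_2\ne0$ and write $A_2=cA_1+D'$ with $D'=c_2^{-1}D$ of rank at most one.

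The point where symmetry enters is the next step. Since $A_1$ and $A_2$ are symmetric, $D'=A_2-cA_1$ is a symmetric operator of rank at most one, hence of the form $D'=\alpha\,u\otimes u$ with $|u|_H=1$ and $\alpha\in\mathbb{R}$ (a nonzero rank-one operator $v\otimes w$ coincides with its adjoint $w\otimes v$ only when $v$ and $w$ are parallel). If $\alpha=0$, then $A_2=cA_1$, contradicting linear independence, so $\alpha\ne0$. From the series defining the measurable linear operator attached to $u\otimes u$ one checks that $\widehat{u\otimes u}\,x=\widehat{u}(x)\,u$, so that $\widehat{A_2}x=c\,\widehat{A_1}x+\alpha\,\widehat{u}(x)\,u$ for a.e.\ $x$.

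Now I would exploit the a.e.\ linear dependence of $\widehat{A_1}x$ and $\widehat{A_2}x$. Since $u\ne0$, the Gaussian variable $\widehat{u}$ vanishes only on a null set, so for a.e.\ $x$ we may pick $(s,t)\ne(0,0)$ with $s\,\widehat{A_1}x+t\,\widehat{A_2}x=0$ while $\widehat{u}(x)\ne0$; substituting the identity above gives $(s+tc)\,\widehat{A_1}x=-t\alpha\,\widehat{u}(x)\,u$, and a short case distinction ($t=0$ versus $t\ne0$) shows $\widehat{A_1}x\in\mathbb{R}u$ in either case. Hence $\widehat{A_1}x\in\mathbb{R}u$ for a.e.\ $x$. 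Consequently, for every $w\in H$ with $w\perp u$ the measurable linear functional $\widehat{A_1w}=(\widehat{A_1}\cdot\,,w)_H$ (using $A_1=A_1^*$) vanishes a.e., so $A_1w=0$ because $\|\widehat{A_1w}\|_{L^2(\mu)}=|A_1w|_H$. Thus $A_1$ annihilates $u^\perp$, and being symmetric it must be of the form $A_1=\gamma\,u\otimes u$ for some $\gamma\in\mathbb{R}$. If $\gamma=0$, then $A_1=0$, contradicting linear independence; otherwise $A_2=cA_1+\alpha\,u\otimes u=\gamma^{-1}(c\gamma+\alpha)\,A_1$, again contradicting linear independence. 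This contradiction proves the corollary.

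The only slightly delicate points I expect are the identification of symmetric rank-one operators as $\alpha\,u\otimes u$ and the verification of $\widehat{u\otimes u}\,x=\widehat{u}(x)\,u$ from the defining series; beyond that the argument is a direct combination of Theorem~\ref{t1}, the polynomial zero-one law, and elementary rank-one linear algebra, with no genuine analytic obstacle.
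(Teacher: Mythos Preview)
Your proof is correct and follows essentially the same route as the paper's: apply Theorem~\ref{t1} to obtain $A_2=cA_1+D$ with $\operatorname{rank}D\le 1$, use symmetry to write $D=\alpha\,u\otimes u$, then exploit the a.e.\ linear dependence of $\widehat{A_1}x,\widehat{A_2}x$ (together with $\widehat{u}(x)\ne 0$ a.e.) to force $\widehat{A_1}x\in\mathbb{R}u$ a.e., whence $A_1$ itself has rank one along $u$ and the two operators are proportional. The only cosmetic differences are that you frame the argument by contradiction and invoke Theorem~\ref{t1} as a black box, whereas the paper reuses the explicit relation $\widehat{A_2}x=k(x)\widehat{A_1}x$ from its proof; your slightly more careful case split ($t=0$ versus $t\ne0$) is a clean substitute for that relation.
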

\begin{proof}
Let us return to the proof of the theorem, where
$\widehat{A_2}x=k(x)\widehat{A_1}x$ a.e. with some measurable function $k$.
We also have $A_2=cA_1+D$, where $D$ has rank at most $1$. Suppose that
$D\not=0$. Clearly, $D$ is also symmetric, hence
$\widehat{D}=\lambda\widehat{h_1}h_1$ for some nonzero vector $h_1\in H$ and
a nonzero number $\lambda$.
Since  $\widehat{D}x=(k(x)-c)\widehat{A_1}x$ a.e.
and $\widehat{D}x\not=0$ a.e. (otherwise $D=0$), we
see $\widehat{A_1}$ takes its values in the
one-dimensional range of $D$ a.e. This means that both
$A_1$ and $A_2$ are one-dimensional and by their symmetry are
of the form $h\mapsto c_i(h,h_1)_H h_1$ with some constants $c_1$ and $c_2$,
whence it is obvious that they are linearly dependent.
\end{proof}

\begin{corollary}
Suppose that functions $f_1,\ldots,f_k$, where $k>2$, belong to $\mathcal{X}_2$ and are linearly
independent. Then either they have a joint density of distribution or some nontrivial
linear combination $c_1f_1+\cdots+c_nf_n$ is a degenerate element of $\mathcal{X}_2$ of rank~$k-1$,
i.e., a second order polynomial in $k-1$ elements of the space $\mathcal{X}_1$
of measurable linear functionals.
\end{corollary}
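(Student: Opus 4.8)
The plan is to transfer the question to the second Malliavin derivatives and then apply Theorem~\ref{t1}. Set $A_i:=D_H^2 f_i$, $i=1,\dots,k$; these are symmetric Hilbert--Schmidt operators on $H$. The first step is to verify that $A_1,\dots,A_k$ are linearly independent. For this I would repeat the argument of the Lemma verbatim: if $\sum_i\alpha_i A_i=0$, then the $H$-valued map $\sum_i\alpha_i D_Hf_i$ has zero derivative along $H$, hence equals a constant vector $h_0\in H$, so $\sum_i\alpha_i f_i=\widehat{h_0}+c$; since elements of $\mathcal{X}_2$ are orthogonal to $E_1$, this forces $h_0=0$ and $c=0$, and then $\alpha_1=\dots=\alpha_k=0$ because $f_1,\dots,f_k$ are linearly independent.

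Next I apply Theorem~\ref{t1} to $A_1,\dots,A_k$ and distinguish the two alternatives it provides. In the first one, the vectors $\widehat{A_1}x,\dots,\widehat{A_k}x$ are linearly independent for almost every $x$; since $D_Hf_i=2\widehat{D_H^2f_i}=2\widehat{A_i}$, this means that the Malliavin gradients $D_Hf_1(x),\dots,D_Hf_k(x)$ are linearly independent a.e., and therefore $f_1,\dots,f_k$ have a joint density of distribution by the basic fact of Malliavin calculus recalled at the start of this section.

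In the second alternative there is a finite-dimensional (hence Hilbert--Schmidt) bounded operator $D$ of rank at most $k-1$ and scalars $c_1,\dots,c_k$, not all zero, with $D=c_1A_1+\cdots+c_kA_k$. I would then put $f:=c_1f_1+\cdots+c_kf_k\in\mathcal{X}_2$; this is a nontrivial linear combination, nonzero because the $f_i$ are linearly independent, and $D_H^2 f=D$. Being a linear combination of symmetric operators, $D$ is symmetric, and it is nonzero, since otherwise $D_H^2 f=0$, which as in the first step would give $f=0$; thus $D$ has a finite rank $r$ with $1\le r\le k-1$. Writing $f$ in its canonical form $f=\sum_n d_n(\eta_n^2-1)$ with an orthonormal sequence $\{\eta_n\}\subset X_\mu^*$, $\eta_n=\widehat{u_n}$, the operator $D_H^2 f$ has eigenbasis $\{u_n\}$ and eigenvalues $\{2d_n\}$; since $D_H^2 f=D$ has rank $r$, only $r$ of the coefficients $d_n$ are nonzero, so $f=\sum_{n\colon d_n\ne0}d_n(\eta_n^2-1)$ is a second order polynomial in $r\le k-1$ measurable linear functionals, i.e. a degenerate element of $\mathcal{X}_2$ of rank at most $k-1$, which is the asserted alternative.

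I do not expect a genuine obstacle here: all the real work lies in Theorem~\ref{t1}, and what remains---the finite-collection form of the Lemma, the identity $D_Hf=2\widehat{D_H^2f}$, and the passage from a finite-rank symmetric operator back to an element of $\mathcal{X}_2$---is routine. The one point that needs a line of care is the last case, where one must observe that $D$ inherits both symmetry and finite rank, from the $A_i$ and from Theorem~\ref{t1} respectively, so that the standard representation~(\ref{repr}) applies and exhibits $f$ as a quadratic polynomial in at most $k-1$ elements of $\mathcal{X}_1$.
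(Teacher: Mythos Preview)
Your argument is correct and is precisely the one the paper has in mind: the corollary is stated there without proof, being an immediate consequence of Theorem~\ref{t1} applied to the operators $A_i=D_H^2f_i$ together with the Lemma (for the linear independence of the $A_i$) and the representation~(\ref{repr}) (for reading off the rank). The only cosmetic point is that both Theorem~\ref{t1} and your argument yield rank \emph{at most} $k-1$, which is indeed all that is claimed once the phrase ``of rank $k-1$'' in the corollary is understood in this sense.
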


\begin{corollary}
Let $\mu$ be a nondegenerate centered Gaussian measure on a separable Fr\'echet space $X$,
let $H$ be its Cameron--Martin space~$H$, and let $A_1,\ldots,A_k$ be linearly independent
 continuous linear operators on $X$ with values in a Banach space~$E$.
Then either the vectors $A_1x,\ldots,A_kx$ are linearly independent a.e. or some
nontrivial linear combination $c_1A_1+\cdots+c_kA_k$ has rank at most $k-1$.
\end{corollary}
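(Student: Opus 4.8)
Our plan is to run the inductive scheme of the proof of Theorem~\ref{t1}, but with the Gram-determinant step (which relied on the Hilbert structure of $H$) replaced by a reduction to a separable subspace of $E$ together with a countable separating family of functionals. Since $\mu$ is nondegenerate, $H$ is dense in $X$, and $H$ is separable; hence the closed linear span $E_0$ of $A_1(H)\cup\dots\cup A_k(H)$ is a separable closed subspace of $E$ containing $A_i(X)$ for every $i$, and, since replacing $E$ by $E_0$ changes neither the ranks of linear combinations of the $A_i$ nor the linear (in)dependence of the vectors $A_ix$, we may assume $E$ separable and fix a countable set $\{\ell_n\}\subset E^*$ separating the points of $E$. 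Then $v_1,\dots,v_m\in E$ are linearly independent if and only if $\det\bigl(\ell_{n_s}(v_t)\bigr)_{s,t=1}^{m}\neq0$ for some $n_1,\dots,n_m$, so for each $m\le k$
$$
\{x:\ A_1x,\dots,A_mx\ \text{linearly dependent}\}=\bigcap_{(n_1,\dots,n_m)}\Bigl\{x:\ \det\bigl(\ell_{n_s}(A_tx)\bigr)_{s,t=1}^{m}=0\Bigr\}
$$
is a countable intersection of zero sets of measurable polynomials on $X$ (each $\ell_n\circ A_t$ lies in $X^*$), hence has measure $0$ or $1$ by the zero-one law for measurable polynomials. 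Thus, for each $m$, the vectors $A_1x,\dots,A_mx$ are \emph{either} linearly independent a.e.\ \emph{or} linearly dependent a.e.; moreover, when they are linearly independent a.e., the same law yields \emph{fixed} indices $n_1,\dots,n_m$ with $\det\bigl(\ell_{n_s}(A_tx)\bigr)_{s,t=1}^{m}\neq0$ a.e.

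We now argue by induction on $k$. For $k=1$ we have $A_1\neq0$, and applying the preceding remark to $\{x:A_1x=0\}=\bigcap_n\{x:\ell_n(A_1x)=0\}$ forces $A_1x\neq0$ a.e., while the alternative conclusion (rank $0$) is vacuous. Let $k\ge2$, assume the claim for $k-1$, and let $A_1,\dots,A_k$ be linearly independent. If $A_1x,\dots,A_kx$ are linearly independent a.e.\ we are done; otherwise they are linearly dependent a.e. We may assume $A_1x,\dots,A_{k-1}x$ are linearly independent a.e.: if not, by the dichotomy they are linearly dependent a.e., and the inductive hypothesis applied to the linearly independent operators $A_1,\dots,A_{k-1}$ gives a nontrivial combination of them of rank $\le k-2$, finishing the proof. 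So fix $\ell_1,\dots,\ell_{k-1}$ with $Q(x):=\det\bigl(\ell_s(A_tx)\bigr)_{s,t=1}^{k-1}\neq0$ a.e.; Cramer's rule then gives
$$
A_kx=\sum_{i=1}^{k-1}c_i(x)A_ix\quad\text{a.e.},\qquad c_i(x)=\frac{Q_i(x)}{Q(x)},
$$
where $Q_i$ is $Q$ with its $i$-th column replaced by $\bigl(\ell_s(A_kx)\bigr)_s$; here $Q$ and $Q_i$ are measurable polynomials on $X$ (hence belong to all Sobolev classes) and $Q\neq0$ a.e.

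Exactly as in the proof of Theorem~\ref{t1}, one may differentiate this identity along $H$ at $\mu$-a.e.\ point: for a.e.\ $x_0$ and every $h$ in a fixed countable dense subset $\mathcal F$ of $H$, the restriction to the line $x_0+\mathbb{R}h$ is an equality between an affine function of $t$ and a rational one regular at $t=0$ (since $Q(x_0)\neq0$) holding on a dense set of parameters, hence near $0$; as $t\mapsto A_i(x_0+th)$ has derivative $A_ih$ and $c_i$ is differentiable along $H$ a.e., differentiating at $t=0$ yields, for a.e.\ $x_0$ and all $h\in\mathcal F$,
$$
A_kh=\sum_{i=1}^{k-1}c_i(x_0)A_ih+\sum_{i=1}^{k-1}\bigl(D_Hc_i(x_0),h\bigr)_H\,A_ix_0 .
$$
Fix such an $x_0$ and set $T:=A_k-\sum_{i=1}^{k-1}c_i(x_0)A_i$, a nontrivial linear combination of $A_1,\dots,A_k$. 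The displayed identity gives $Th\in W:={\rm span}\{A_1x_0,\dots,A_{k-1}x_0\}$ for all $h\in\mathcal F$; since $\mathcal F$ is dense in $X$, $T$ is continuous, and $W$ is finite-dimensional (hence closed) with $\dim W\le k-1$, it follows that $T(X)\subseteq W$, i.e.\ ${\rm rank}\,T\le k-1$. This closes the induction.

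The one genuinely new point is the first step: with values in a Banach space there is no Gram matrix, so $E$ must first be cut down to a separable subspace and linear (in)dependence encoded by countably many polynomial identities before the zero-one law can be invoked. After that, the differentiation argument is literally that of Theorem~\ref{t1}, and the transition from a rank bound on $H$ to a rank bound on $X$ is immediate from the density of $H$ in $X$ and the continuity of the operators $A_i$.
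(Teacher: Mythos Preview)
Your proof is correct, but it takes a genuinely different route from the paper's. The paper disposes of the corollary in a few lines by \emph{reducing} to Theorem~\ref{t1}: one first passes to a separable target (as you do), then embeds $E$ injectively into $l^2$, and finally observes that the restrictions of continuous operators $X\to l^2$ to $H$ are automatically Hilbert--Schmidt (or composes with an injective Hilbert--Schmidt operator); since injective linear maps preserve both linear (in)dependence of the vectors $A_ix$ and the ranks of linear combinations of the $A_i$, Theorem~\ref{t1} applied to these restrictions yields the conclusion directly.

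You instead re-run the inductive machinery of Theorem~\ref{t1} in the Banach setting, replacing the Gram determinant (which needs a Hilbert target) by determinants built from a countable separating family $\{\ell_n\}\subset E^*$ and then invoking the zero--one law on each polynomial $\det(\ell_{n_s}(A_tx))$ individually. This is longer but has the merit of being self-contained: it avoids both the embedding $E\hookrightarrow l^2$ and the cited fact that restrictions to $H$ are Hilbert--Schmidt, and it makes transparent that only the zero--one law for polynomials plus differentiability of rational functions of continuous linear functionals along $H$ are really used. The paper's reduction is slicker; your argument is more explicit about what structure is actually needed.
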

\begin{proof}
It suffices to consider the case of infinite-dimensional $H$.
We may assume that $E$ is separable. Embedding $E$ into $l^2$ by means of an injective
continuous linear operator we can pass to the case
$E=H$. Finally, taking an injective Hilbert--Schmidt operator $S$ and dealing with
the operators $SA_1,\ldots,SA_k$ we arrive at the situation in the theorem
(in fact, there is no need to take $S$, since the restrictions of our operators to $H$
will be automatically Hilbert--Schmidt operators, see \cite[Proposition~3.7.10]{B1}).
\end{proof}

Let us show that the rank of a degenerate linear combination indicated in Theorem~\ref{t1}
cannot be made smaller in general.

\begin{proposition}
For every $k\in\mathbb{N}$ there exist  operators $A_1,\ldots,A_k\colon\, \mathbb{R}^k
\to \mathbb{R}^d$, where $d=k(k-1)/2$, such
that, for every vector $x\in\mathbb{R}^k$, the vectors $A_1 x,\ldots,A_kx$ are linearly dependent, but for
every nonzero vector $x=(x_1,\ldots,x_k)$ the operator $\sum_{i=1}^k x_iA_i$ has rank $k-1$.
In particular, a nontrivial linear combination of $A_1,\ldots,A_k$ cannot be of rank less than $k-1$.
\end{proposition}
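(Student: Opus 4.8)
The plan is to realise the target space $\mathbb{R}^d$ with $d=k(k-1)/2$ as the space $\Lambda$ of antisymmetric $k\times k$ real matrices, whose dimension is exactly $\binom{k}{2}=d$ (identifying a matrix with the tuple of its entries above the diagonal), and to define
$$
A_i\colon \mathbb{R}^k\to\Lambda,\qquad A_iy=e_iy^\top-ye_i^\top\quad(i=1,\ldots,k),
$$
where $e_1,\ldots,e_k$ is the standard basis of $\mathbb{R}^k$. This is the ``wedge product'' construction $A_iy=e_i\wedge y$, and it is the natural candidate once one notices that $d$ is precisely $\dim\Lambda^2\mathbb{R}^k$. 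For $k=1$ we have $d=0$ and the statement is vacuous, so from now on $k\ge 2$.

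First I would verify the pointwise linear dependence. For every $c=(c_1,\ldots,c_k)\in\mathbb{R}^k$ one computes
$$
\sum_{i=1}^k c_iA_ix=\Bigl(\sum_i c_ie_i\Bigr)x^\top-x\Bigl(\sum_i c_ie_i\Bigr)^\top=cx^\top-xc^\top,
$$
so the choice $c=x$ gives $\sum_{i=1}^k x_iA_ix=xx^\top-xx^\top=0$. When $x\ne 0$ this is a nontrivial relation among $A_1x,\ldots,A_kx$, and for $x=0$ these vectors vanish; hence $A_1x,\ldots,A_kx$ are linearly dependent for every $x\in\mathbb{R}^k$.

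Next I would compute the rank of a nontrivial linear combination. For fixed $x=(x_1,\ldots,x_k)\ne 0$ set $B_x:=\sum_{i=1}^k x_iA_i$, so that $B_xy=xy^\top-yx^\top$. If $B_xy=0$, then $xy^\top=yx^\top$; choosing an index $p$ with $x_p\ne 0$ and comparing the $p$-th columns yields $y_p\,x=x_p\,y$, i.e. $y=(y_p/x_p)x\in\mathbb{R}x$. The inclusion $\mathbb{R}x\subset\ker B_x$ being trivial, we get $\ker B_x=\mathbb{R}x$, a one-dimensional subspace, and therefore $\operatorname{rank}B_x=k-1$. In particular every nonzero linear combination of $A_1,\ldots,A_k$ has rank exactly $k-1\ge 1$ (so, incidentally, $A_1,\ldots,A_k$ are themselves linearly independent), which is what the proposition asserts.

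I do not anticipate any real obstacle here: the two facts requiring care — the dimension count $\dim\Lambda=k(k-1)/2$ and the identification $\ker B_x=\mathbb{R}x$ — are elementary linear algebra, and the single modelling choice, realising $\mathbb{R}^d$ as antisymmetric matrices, is essentially forced by the equality $d=\binom{k}{2}$.
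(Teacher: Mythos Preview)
Your proof is correct and rests on the same idea as the paper's: build operators satisfying the bilinear antisymmetry $A_i^j=-A_j^i$ so that $\sum_i x_iA_ix=0$ identically, and then check that $\ker\bigl(\sum_i x_iA_i\bigr)=\mathbb{R}x$ via the relations $x_iy_j-x_jy_i=0$. The difference is purely in presentation: the paper works coordinatewise, freely choosing half of the columns $A_i^j$ to be standard basis vectors of $\mathbb{R}^d$ and determining the rest from the antisymmetry equations, which leads to a somewhat artificial split into the odd and even cases for~$k$; your coordinate-free realisation of $\mathbb{R}^d$ as $\Lambda^2\mathbb{R}^k$ with $A_iy=e_i\wedge y$ packages the same construction uniformly and makes both the dependence relation and the kernel computation transparent in one line each.
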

\begin{proof}
Let $A_1,\ldots,A_k$ be certain operators from $\mathbb{R}^k$
to $\mathbb{R}^d$. Each operator $A_i$ is represented by a matrix $(a_i^{l,j})_{l\le d, j\le k}$ with
$k$ columns $A_i^1,\ldots,A_i^k$, where $A_i^j=(a_i^{1,j},\ldots,a_i^{d,j})$.
We first choose matrices $A_i$ such that $A_i^i=0$ and
$$
\sum_{i=1}^k x_iA_i x=0 \quad \forall\, x\in\mathbb{R}^k.
$$
The latter is equivalent to the following system
of vector equations:
$$
A_i^j=-A_j^i, \ i\not=j.
$$
 Thus, we obtain  $d=k(k-1)/2$ vector equations for $k(k-1)$ nonzero columns of the matrices $A_i$.
Therefore, once we define $d$ columns (one column for every equation), the remaining
columns will be uniquely determined from the equations.

Let us consider two cases: $k$ is odd or $k$ is even.
Let $k$ be odd, $m=(k-1)/2$. In every matrix $A_i$ we define $m$ columns
(the remaining columns  will be determined from the equations) in the following way:
$A_i^j=e_{(i-1)m+j-i}$, $j=i+1,\ldots, i+m\mod k$, where $e_1,\ldots,e_d$ is
the standard basis of $\mathbb{R}^d$ and $j\mod k$ means the integer number $r\le k$ with
$j=pk+r$. It is easily seen that for every $s\in\{1,\ldots,d\}$
 there exists a unique pair of columns $A_i^j,A_j^i$
  with $a_i^{sj}\not=0$; for different $s$ such pairs  are different.
 Now let us fix a nonzero vector $x\in \mathbb{R}^k$.
 Let $y\in {\rm Ker}\, \sum_{i=1}^k x_iA_i$.
  We obtain that $x_iy_j-x_jy_i=0$ for all pairs $i,j$, which
  yields the linear dependence of the vectors $x$ and $y$.
  Therefore, $\dim\Bigl( {\rm Ker} \sum_{i=1}^k x_iA_i\Bigr)=1$, which means
that the rank of   $\sum_{i=1}^k x_iA_i$ is $k-1$.

For example, let $k=3$. Then $d=k(k-1)/2=3$, $m=(k-1)/2=1$.
We have $k(k-1)/2=3$ equations $A_i^j=-A_j^i$, $i\neq j$.
First we define one column in every matrix, which along with the
equality $A_i^i=0$ yields the following representation:
$$\begin{pmatrix}
0&1&A_{1}^{13}\\
0&0&A_{1}^{23}\\
0&0&A_{1}^{33}
\end{pmatrix},
\begin{pmatrix}
A_2^{11}&0&0\\
A_2^{21}&0&1\\
A_2^{31}&0&0
\end{pmatrix},
\begin{pmatrix}
0&A_3^{12}&0\\
0&A_3^{22}&0\\
1&A_3^{32}&0
\end{pmatrix}.
$$
From the equations we find that
$$
A_1^3=-A_3^1=(0,0,-1),
\
A_2^1=-A_1^2=(-1,0,0),
\
A_3^2=-A_2^3=(0,-1,0),
$$
so that our matrices are
$$\begin{pmatrix}
0&1&0\\
0&0&0\\
0&0&-1
\end{pmatrix},
\begin{pmatrix}
-1&0&0\\
0&0&1\\
0&0&0
\end{pmatrix},
\begin{pmatrix}
0&0&0\\
0&-1&0\\
1&0&0
\end{pmatrix}.
$$
Suppose now we have a nontrivial linear combination
$$
A=x\begin{pmatrix}
0&1&0\\
0&0&0\\
0&0&-1
\end{pmatrix}+
y\begin{pmatrix}
-1&0&0\\
0&0&1\\
0&0&0
\end{pmatrix}+
z\begin{pmatrix}
0&0&0\\
0&-1&0\\
1&0&0
\end{pmatrix}
$$
and a vector $y=(a,b,c)\in {\rm Ker}\, A$. Then
$$
A\begin{pmatrix}
a\\
b\\
c
\end{pmatrix}=
\begin{pmatrix}
-ay+bx\\
yc-zb\\
-xc+za
\end{pmatrix}=
\begin{pmatrix}
0\\
0\\
0
\end{pmatrix}.
$$
Hence the vectors $(a,b,c)$ and $(x,y,z)$ are linearly dependent, so
 $\dim {\rm Ker}\, A=1$ and the rank of $A$ is~$2$.

In case of even $k$ everything is similar.
  Let $k=2m$. In every matrix $A_1,\ldots,A_{k-1}$ we define $m$ columns in the
  following way. First, dealing with the columns $A_i^1,\ldots,A_i^{k-1}$
and ignoring the last $k-1$ lines we
  define the matrix elements in the same way as for
  the odd number $k'=k-1$. Next, in the $i$th matrix
  (excepting the last one) in the $k$th column we put $1$
  at the $(d-(k-1)+i)$th position and put $0$ at the other positions.
\end{proof}

\begin{example}
{\rm
In case $k=3$, we have a simple example of three linearly independent
elements  $xy$, $x^2-1$, $y^2-1$ of $\mathcal{X}_2$
on the plane with the standard Gaussian measure such that
their gradients $(y,x)$, $(2x,0)$ and $(0,2y)$ are linearly dependent at every point $(x,y)$
in the plane. Thus, their second derivatives are symmetric linearly independent operators whose
values on every fixed vector are linearly dependent.
}\end{example}

\section{Convergence in law in the second Wiener chaos}

Let us first recall some important known results.

\begin{theorem}\label{NP-double}
Let $g_1,g_2,\ldots$ be a sequence of independent $N(0,1)$
random variables. Let $F_n$ be a sequence of the form
$F_n=\sum_{k=1}^\infty \alpha_{k,n}(g_k^2-1)$
with $\sum_{k=1}^\infty \alpha_{k,n}^2=\frac12$, i.e., ${\rm E}[F_n^2]=1$.
Assume also that $\sup_{k\ge  1}|\alpha_{k,n}|\to 0$
as $n\to\infty$. Then $F_n\overset{\rm law}{\to} N(0,1)$
as $n\to\infty$.
\end{theorem}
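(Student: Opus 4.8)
The plan is to compute the characteristic function of $F_n$ in closed form and show that it converges pointwise to $t\mapsto e^{-t^2/2}$, the characteristic function of $N(0,1)$; L\'evy's continuity theorem then gives $F_n\overset{\rm law}{\to}N(0,1)$.

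First I would recall the elementary identity: if $g\sim N(0,1)$, then for every real $t$
$$
{\rm E}\bigl[e^{it\alpha(g^2-1)}\bigr]=e^{-it\alpha}(1-2it\alpha)^{-1/2},
$$
where the square root is the principal branch (legitimate since ${\rm Re}(1-2it\alpha)=1>0$). Since $F_n$ is the $L^2$-limit of its partial sums $\sum_{k\le M}\alpha_{k,n}(g_k^2-1)$, it is also the limit in law, so for each fixed $t$ the characteristic functions of the partial sums converge to that of $F_n$; using independence of the $g_k$ and $\sum_k\alpha_{k,n}^2<\infty$ (which forces $\alpha_{k,n}\to0$ in $k$, hence absolute convergence of the infinite product) this gives
$$
{\rm E}\bigl[e^{itF_n}\bigr]=\prod_{k=1}^\infty e^{-it\alpha_{k,n}}(1-2it\alpha_{k,n})^{-1/2}.
$$

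Next I would take logarithms and expand in powers of $\alpha_{k,n}$. Put $s_n:=\sup_{k\ge1}|\alpha_{k,n}|$ and fix $t$; for $n$ large enough that $2|t|s_n<1$, apply $-\tfrac12\log(1-z)=\tfrac12\sum_{m\ge1}z^m/m$ with $z=2it\alpha_{k,n}$. The $m=1$ term cancels the factor $e^{-it\alpha_{k,n}}$, the $m=2$ term contributes $-t^2\alpha_{k,n}^2$, and summing over $k$ (with $\sum_k\alpha_{k,n}^2=\tfrac12$) yields
$$
\log{\rm E}\bigl[e^{itF_n}\bigr]=-\frac{t^2}{2}+\sum_{m=3}^\infty\frac{(2it)^m}{2m}\sum_{k=1}^\infty\alpha_{k,n}^m.
$$
Since $\bigl|\sum_k\alpha_{k,n}^m\bigr|\le s_n^{m-2}\sum_k\alpha_{k,n}^2=\tfrac12 s_n^{m-2}$ for $m\ge2$, the remainder is bounded in modulus by $\tfrac14\sum_{m\ge3}(2|t|)^m s_n^{m-2}/m\le \tfrac{(2|t|)^3 s_n}{12(1-2|t|s_n)}$, which tends to $0$ as $n\to\infty$ because $s_n\to0$. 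Hence ${\rm E}[e^{itF_n}]\to e^{-t^2/2}$ for every $t\in\mathbb{R}$, and the theorem follows.

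The only steps requiring genuine care — the rest being routine power-series manipulation — are the passage from the partial sums to the infinite product for the characteristic function, and the rearrangement of the double series defining $\log{\rm E}[e^{itF_n}]$; both are justified by $\sum_k\alpha_{k,n}^2<\infty$ together with the absolute convergence guaranteed once $2|t|s_n<1$. An essentially equivalent route is via cumulants: the $p$-th cumulant of $F_n$ equals $2^{p-1}(p-1)!\sum_k\alpha_{k,n}^p$, which is $0$ for $p=1$, equals $1$ for $p=2$, and is $O(s_n^{p-2})\to0$ for $p\ge3$; these are exactly the cumulants of $N(0,1)$, and since the normal law is determined by its moments, convergence in law follows. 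I would present the characteristic-function argument, as it avoids the moment-determinacy bookkeeping.
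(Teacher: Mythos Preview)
Your argument is correct. The closed-form characteristic function of $\alpha(g^2-1)$, the product representation, the power-series expansion with the cancellation of the linear term, and the remainder estimate are all valid; the only place where the wording is slightly loose is the parenthetical ``$\alpha_{k,n}\to 0$ in $k$, hence absolute convergence of the infinite product'' --- what actually makes the product (and the sum of logarithms) absolutely convergent is that the logarithm of each combined factor $e^{-it\alpha_{k,n}}(1-2it\alpha_{k,n})^{-1/2}$ is $O(\alpha_{k,n}^2)$ after the first-order cancellation, not merely that $\alpha_{k,n}\to 0$. This is exactly what your subsequent expansion shows, so the argument stands.

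The paper takes a genuinely different route: it observes that $F_n=\sum_k \xi_{k,n}$ with $\xi_{k,n}=\alpha_{k,n}(g_k^2-1)$ is a row-wise independent array with ${\rm E}\xi_{k,n}=0$ and $\sum_k{\rm E}\xi_{k,n}^2=1$, and simply verifies the Lindeberg condition $\sum_k{\rm E}[\xi_{k,n}^2 I_{|\xi_{k,n}|\ge\varepsilon}]\to 0$ via a Cauchy--Schwarz/Chebyshev bound, then invokes the Lindeberg--Feller CLT. That proof is shorter because it outsources the analytic work to a classical theorem, and it would apply verbatim if $g_k^2-1$ were replaced by any fixed centered variable with a fourth moment. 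Your approach is more self-contained and exploits the specific Gaussian structure (the explicit characteristic function of $\chi_1^2$); it also makes the cumulant picture transparent, which connects naturally to the Nualart--Peccati fourth-moment viewpoint the paper mentions as an alternative.
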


This result is a very special case of a classical extension
of the Lindeberg theorem (see \cite[Section~21]{GK} or
\cite[Section~21.2]{Loeve})
on convergence
of a sequence of series $F_n=\sum_{k=1}^\infty \xi_{k,n}$ with independent
in every series centered random variables such that $\sum_{k=1}^\infty {\rm E}\xi_{k,n}^2=1$.
The condition of convergence in law to the standard normal distribution is that
$$
\sum_{k=1}^\infty {\rm E}(\xi_{k,n}^2I_{|\xi_{k,n}|\ge \varepsilon})\to 0\quad \hbox{for every $\varepsilon>0$.}
$$
In our case this condition is obviously satisfied, since
\begin{eqnarray*}
{\rm E}[\alpha_{k,n}^2(g^2-1)I_{|g^2-1|\ge \frac{\varepsilon}{|\alpha_{k,n}|}}]&\le& \alpha_{k,n}^2\sqrt{{\rm E}[(g^2-1)^4]}
\sqrt{{\rm P}(|g^2-1|\geq \varepsilon/|\alpha_{k,n}|)}\\
&\le& C \frac{|\alpha_{k,n}|^3}{\varepsilon}
\le \frac{C}{\varepsilon} \sup_{k\geq 1}|\alpha_{k,n}|\times \alpha_{k,n}^2,
\end{eqnarray*}
where $g$ has the standard normal distribution.
It is worth noting that we could also use a recent result of
Nualart and Peccati \cite{NP} on convergence of distributions of multiple Wiener integrals.

We shall also need the following result of Peccati and Tudor \cite{PT}
(see also \cite[Chapter 6]{np-book}).

\begin{theorem}\label{PT-thm}
Let $k_1,k_2\in \mathbb{N}$ be fixed and let $k=k_1+k_2$.
Suppose we are given standard normal variables
$g_{i,j,n}$, $1\le i\le k$, $j,n=1,2,\ldots$, such that, for each fixed $n$,
the variables $g_{i,j,n}$ are jointly Gaussian
and ${\rm E}[g_{i,j,n}g_{i,j',n}]=0$ for any $i$ whenever $j\neq j'$.
Let us consider random vectors $F_n=(F_{1,n},\ldots,F_{k,n})\in\mathbb{R}^k$, where
$F_{i,n}=g_{i,1,n}$ if $i=1,\ldots,k_1$ and
$$
F_{i,n} = \sum_{j=1}^\infty \lambda_{i,j,n}(g_{i,j,n}^2-1) \quad\hbox{if $i=k_1+1,\ldots,k_1+k_2$,
where $\lambda_{i,j,n}\in\mathbb{R}, \ \sum_{j=1}^\infty \lambda_{i,j,n}^2<\infty$.}
$$
Assume that a finite limit $C_{i,j}:=\lim\limits_{n\to\infty}{\rm E}[F_{i,n}F_{j,n}]$
exists for all $i,j$. Then the following two assertions are equivalent{\rm:}

{\rm(a)} $(F_{1,n},\ldots,F_{k,n})\overset{\rm law}{\to} N_k(0,C)$  as $n\to\infty$,
where $C=(C_{i,j})_{1\le i,j\le k}${\rm;}

{\rm(b)} $F_{i,n}\overset{\rm law}{\to} N_1(0,C_{i,i})$  as $n\to\infty$
for each $i=k_1+1,\ldots,k_1+k_2$.
\end{theorem}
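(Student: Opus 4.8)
\emph{Proof proposal.} The plan is to establish the nontrivial implication $(b)\Rightarrow(a)$, the reverse one being immediate since marginals of a convergent sequence of laws converge. By the L\'evy continuity theorem it suffices to fix $u=(u_1,\dots,u_k)\in\mathbb{R}^k$ and show that $S_n:=\sum_{i=1}^k u_iF_{i,n}$ converges in law to $N\bigl(0,\sum_{i,j}u_iu_jC_{i,j}\bigr)$. I would split $S_n=G_n+Q_n$, where $G_n:=\sum_{i\le k_1}u_ig_{i,1,n}\in\mathcal{X}_1$ and $Q_n:=\sum_{i>k_1}u_iF_{i,n}\in\mathcal{X}_2$, both living in the Gaussian space generated, for the given $n$, by the family $\{g_{i,j,n}\}$. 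Since $\mathcal{X}_1\perp\mathcal{X}_2$, we have $\mathrm{E}[G_nQ_n]=0$ and $C_{i,j}=0$ whenever $i\le k_1<j$; combined with the assumed convergence of covariances this gives $\mathrm{E}[G_n^2]\to a:=\sum_{i,j\le k_1}u_iu_jC_{i,j}$, $\mathrm{E}[Q_n^2]\to b:=\sum_{i,j>k_1}u_iu_jC_{i,j}$, and $a+b=\sum_{i,j}u_iu_jC_{i,j}$. The difficulty is that, although uncorrelated, $G_n$ and $Q_n$ need not be independent, so one cannot simply combine the (trivial) Gaussian limit of $G_n$ with that of $Q_n$.

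The first substantial step is to prove that the operator norm of $D_H^2Q_n$ tends to $0$. Since $\mathrm{E}[F_{i,n}^2]\to C_{i,i}$, Nelson's hypercontractivity implies $\sup_n\|F_{i,n}\|_{L^8}<\infty$, so $\{F_{i,n}^4\}_n$ is bounded in $L^2$, hence uniformly integrable; therefore, for $i>k_1$, assumption $(b)$ yields $\mathrm{E}[F_{i,n}^4]\to 3C_{i,i}^2$. Writing $F_{i,n}$ in the diagonal form (\ref{repr}) with coefficients $\lambda_{i,j,n}$, the fourth cumulant is $\mathrm{E}[F_{i,n}^4]-3\bigl(\mathrm{E}[F_{i,n}^2]\bigr)^2=48\sum_j\lambda_{i,j,n}^4$, so $\sum_j\lambda_{i,j,n}^4\to0$ and hence $\sup_j|\lambda_{i,j,n}|\to0$ (the case $C_{i,i}=0$ is immediate from $\mathrm{E}[F_{i,n}^2]\to0$; alternatively one may invoke the fourth moment theorem of Nualart and Peccati \cite{NP}). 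As $D_H^2F_{i,n}$ has eigenvalues $\{2\lambda_{i,j,n}\}$, its operator norm equals $2\sup_j|\lambda_{i,j,n}|$; since $D_H^2$ is linear, $D_H^2Q_n=\sum_{i>k_1}u_iD_H^2F_{i,n}$, and by subadditivity of the operator norm $\|D_H^2Q_n\|\le 2\sum_{i>k_1}|u_i|\sup_j|\lambda_{i,j,n}|=:2\varepsilon_n\to0$.

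Next I would diagonalize simultaneously. Choose an orthonormal basis $\{e_{j,n}\}_j$ of $H$ consisting of eigenvectors of $\tfrac12D_H^2Q_n$, with eigenvalues $\mu_{j,n}$ satisfying $\sup_j|\mu_{j,n}|\le\varepsilon_n$; then $\gamma_{j,n}:=\widehat{e_{j,n}}$ are i.i.d.\ $N(0,1)$ for the given $n$, and $Q_n=\sum_j\mu_{j,n}(\gamma_{j,n}^2-1)$, while $G_n=\widehat{h_n}=\sum_j\beta_{j,n}\gamma_{j,n}$ with $\beta_{j,n}=(h_n,e_{j,n})_H$. Thus $S_n=\sum_j\xi_{j,n}$ with $\xi_{j,n}:=\beta_{j,n}\gamma_{j,n}+\mu_{j,n}(\gamma_{j,n}^2-1)$, a row-independent array of centered summands with $\sum_j\beta_{j,n}^2=\mathrm{E}[G_n^2]$ and $\sum_j\mu_{j,n}^2=\tfrac12\mathrm{E}[Q_n^2]$; in particular the series converges in $L^2$. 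Using the elementary identity $\mathrm{E}\,e^{sg+rg^2}=(1-2r)^{-1/2}\exp\bigl(\tfrac{s^2}{2(1-2r)}\bigr)$ for $g\sim N(0,1)$ and $\mathrm{Re}\,r<\tfrac12$, together with independence within rows, one obtains
$$
\log\mathrm{E}\,e^{itS_n}=\sum_j\Bigl[-it\mu_{j,n}-\tfrac12\log(1-2it\mu_{j,n})\Bigr]-\frac{t^2}{2}\sum_j\frac{\beta_{j,n}^2}{1-2it\mu_{j,n}},
$$
both series converging absolutely because $\sum_j(\beta_{j,n}^2+\mu_{j,n}^2)<\infty$.

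Finally I would let $n\to\infty$. Because $\sup_j|\mu_{j,n}|\le\varepsilon_n\to0$, the bracketed term equals $-t^2\mu_{j,n}^2+O(|\mu_{j,n}|^3)$ uniformly in $j$, so the first sum tends to $-t^2\lim_n\sum_j\mu_{j,n}^2=-\tfrac12bt^2$; similarly $\bigl|(1-2it\mu_{j,n})^{-1}-1\bigr|\le C(t)\varepsilon_n$ uniformly in $j$, whence the second sum tends to $-\tfrac12at^2$. Therefore $\log\mathrm{E}\,e^{itS_n}\to-\tfrac12(a+b)t^2$, i.e.\ $S_n\overset{\rm law}{\to}N(0,a+b)=N\bigl(0,\sum_{i,j}u_iu_jC_{i,j}\bigr)$, which proves $(a)$. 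The main obstacle throughout is the coupling between the first- and second-chaos components of $S_n$; it is resolved precisely by the simultaneous diagonalization of $G_n$ and $Q_n$ in the eigenbasis of $D_H^2Q_n$, which turns $S_n$ into a genuine sum of row-independent variables and makes the explicit Gaussian characteristic-function computation available (this computation also contains a proof of Theorem \ref{NP-double}). One could alternatively argue by induction on $k$, reducing to the case $(k_1,1)$, but the direct computation is cleaner and avoids any circularity.
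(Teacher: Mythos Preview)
The paper does not prove this theorem: it is quoted as a known result of Peccati and Tudor \cite{PT} (see also \cite[Chapter~6]{np-book}) and used as a black box in the proof of Theorem~\ref{main2}. So there is no ``paper's own proof'' to compare against.

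Your argument is correct and self-contained. The key steps all go through: hypercontractivity gives the uniform integrability needed to pass from convergence in law to convergence of fourth moments; the fourth-cumulant identity $\kappa_4(F_{i,n})=48\sum_j\lambda_{i,j,n}^4$ is right and forces $\sup_j|\lambda_{i,j,n}|\to0$; diagonalizing the symmetric Hilbert--Schmidt operator $\tfrac12 D_H^2Q_n$ in the common Gaussian space generated by $\{g_{i,j,n}\}_j$ (for fixed $n$) is legitimate and turns $S_n$ into a row-independent triangular array; and the explicit characteristic-function computation with the Taylor expansion $-iz-\tfrac12\log(1-2iz)=-z^2+O(|z|^3)$ is accurate. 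The Cram\'er--Wold reduction is sufficient here precisely because the target law is Gaussian.

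It may be worth noting that your route differs from the original Peccati--Tudor argument, which is phrased in terms of multiple Wiener--It\^o integrals and contraction norms. Your approach is more elementary: it bypasses the contraction machinery entirely by exploiting the special structure of the second chaos (diagonalizability of $D_H^2$), at the cost of being specific to orders $1$ and $2$. As you observe, the same computation with all $\beta_{j,n}=0$ also yields Theorem~\ref{NP-double}.
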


We now use Corollary \ref{NP-double} and Theorem \ref{PT-thm}
to prove Theorem~\ref{main2}.

\begin{proof}
We can assume that the limit of the distributions of $F_n$ is not
the Dirac mass at zero (otherwise the desired conclusion is trivial).
It is known that convergence in law for a sequence of
measurable polynomials of a fixed degree yields boundedness in all $L^p$
(see, e.g., \cite[Lemma~1]{AB}, \cite[Exercise~9.8.19]{B2} or \cite[Lemma 2.4]{NP}).
Therefore, without loss of generality we may assume that, for all $i$ and $n$,
$$
\sum_{k=1}\lambda_{i,k,n}^2=\frac12,
$$
that is, ${\rm E}[F_{i,n}^2]=1$ for all $i$ and $n$.
We can also assume that
$$
|\lambda_{i,1,n}|\ge  |\lambda_{i,2,n}|\ge  \cdots.
$$
Finally, using a suitable diagonalization {\it \`a la} Cantor,
we can assume that
$$
\lambda_{i,j,n}\to\mu_{i,j} \quad\hbox{as $n\to\infty$.}
$$
By Fatou's lemma we have $C_i:=\sum_{k=1}^\infty \mu_{i,k}^2\le \frac12$.
On the other hand, we claim that there exist natural numbers $D_{i,n}\to\infty$ such that
\begin{equation}\label{claim1}
C_{i,n}:=\sum_{k=1}^{D_{i,n}} (\lambda_{i,k,n}-\mu_{i,k})^2\to 0
\quad \hbox{as $n\to\infty$.}
\end{equation}
Indeed, let $a_{i,k,n}=(\lambda_{i,k,n}-\mu_{i,k})^2$. For every  $k\ge  1$,
let $B_{i,k}\ge  1$ be the smallest integer
such that if $n\ge  B_{i,k}$, then $a_{i,1,n}+\ldots+a_{i,k,n}\le \frac1k$.
It is clear that $(B_{i,k})_{k\ge  1}$ is an
increasing sequence. Without loss of generality, one can
assume that $B_{i,k}\to\infty$ as $k\to\infty$ (if $B_{i,k}\not\to\infty$, then
 $B_{i,k}=B_{i,\infty}$ for all $k$ large enough, which means
that $a_{i,1,n}+\ldots+a_{i,N,n}\le \frac1k$ for all $n\ge  B_{i,\infty}$
and all $N$; then $a_{i,j,n}=0$ for all $j$ and all $n\ge  B_{i,\infty}$, so that the existence of $D_{i,n}$
becomes obvious).
Set $D_{i,n}=\sup\{k\colon\, B_{i,k}\le n\}$. In particular, one has $B_{D_{i,n}}\le n$, implying in turn that
$a_{i,1,n}+\ldots+a_{i,D_{i,n},n}\le \frac1{D_{i,n}}$. Moreover, $D_{i,n}\uparrow\infty$ (since $n<B_{i,D_{i,n}}+1$;
if $D_{i,n}\not\to\infty$, then $D_{i,n}=D_{i,\infty}$ for $n$
large enough, which is absurd when $n\to\infty$).

It is clear from (\ref{claim1})  that
$$
\sum_{j=1}^{D_{i,n}}\lambda_{i,j,n}(g_{i,j,n}^2-1)
\overset{\rm (law)}{=}\sum_{j=1}^{D_{i,n}}\lambda_{i,j,n}(g_{i,j,1}^2-1)\overset{L^2}{\to} \sum_{j=1}^\infty \mu_{i,j}(g_{i,j,1}^2-1)\quad \mbox{as $n\to\infty$.}
$$
On the other hand, we claim that
\begin{equation}\label{nunupec}
\sum_{j=D_{i,n}+1}^\infty \lambda_{i,j,n}(g_{i,j,n}^2-1)\overset{\rm law}{\to} N(0,1-2C_i).
\end{equation}
Indeed, if $C_i=\frac12$, then
\begin{eqnarray*}
\sum_{j=1}^\infty (\lambda_{i,j,n}-\mu_{i,j})^2 &= &2\sum_{j=1}^\infty (\mu_{i,j}-\lambda_{i,j,n})\mu_{i,j}\\
&\le& 2\sum_{j=1}^N |\mu_{i,j}||\mu_{i,j}-\lambda_{i,j,n}|
+2\Bigl(\sum_{j=1}^\infty |\mu_{i,j}-\lambda_{i,j,n}|^2\Bigr)^{1/2}
\Bigl(\sum_{j=N+1}^\infty \mu_{i,j}^2\Bigr)^{1/2}\\
&\le& 2\sum_{j=1}^N |\mu_{i,j}||\mu_{i,j}-\lambda_{i,j,n}|+2\sqrt{2}
\Bigl(\sum_{j=N+1}^\infty \mu_{i,j}^2\Bigr)^{1/2},
\end{eqnarray*}
so that
$$
\limsup_{n\to\infty}
\sum_{j=1}^\infty (\lambda_{i,j,n}-\mu_{i,j})^2\le 2\sqrt{2}\Bigl(\sum_{j=N+1}^\infty \mu_{i,j}^2\Bigr)^{1/2},
$$
whence it follows that $\sum_{k=1}^\infty (\lambda_{i,j,n}-\mu_{i,j})^2\to 0$ as $n\to\infty$. As a result,
one has
$$
\sum_{j=D_{i,n}+1}^\infty \lambda_{i,j,n}^2\le 2
\sum_{j=1}^\infty (\lambda_{i,j,n}-\mu_{i,j})^2
+2\sum_{j=D_{i,n}+1}^\infty \mu_{i,j}^2\to 0
$$
and (\ref{nunupec}) is shown whenever $C_i=\frac12$.
Assume now that $C_i<\frac12$.
Since $D_{i,n}\to\infty$ and
$$
D_{i,n}\lambda^2_{i,D_{i,n}+1,n}\le \lambda_{i,1,n}^2 +\ldots + \lambda_{i,D_{i,n},n}^2\le \frac12,
$$
we obtain that $\lambda_{i,D_{i,n}+1,n}\to 0$ as $n\to\infty$.
Let us consider the variables
$$
G_{i,n} = \frac{1}{\sqrt{1-2C_{i,n}}}\sum_{j=D_{i,n}+1}^\infty \lambda_{i,j,n}(g_{i,j,n}^2-1).
$$
It is readily verified that ${\rm E}[G_{i,n}^2]=1$ for all $n$ and that
$$
\sup_{j\ge  D_{i,n}+1}\frac{|\lambda_{i,j,n}|}{\sqrt{1-2C_{i,n}}}\le \frac{|\lambda_{i,D_{i,n}+1,n}|}{\sqrt{1-2C_{i,n}}}\to 0\quad\mbox{as $n\to\infty$}.
$$
By Theorem \ref{NP-double} we obtain that
$G_{i,n}\overset{\rm law}{\to} N(0,1)$, which yields in turn that
(\ref{nunupec}) holds true whenever $C_i<\frac12$.

Extracting a subsequence, we may assume that a finite limit
\begin{equation}\label{existence}
\alpha_{i,i',j,j'}:=\lim_{n\to\infty} {\rm E}[g_{i,j,n}g_{i',j',n}]
\end{equation}
exists for all $i,i',j,j'$.
Note that $\alpha_{i,i,j,j'}=\delta_{jj'}$, where $\delta_{jj'}$ is the Kronecker symbol.
Let
$$
{\bf g}_\infty=(g_{i,j,\infty})_{i=1,\ldots,d,\,j= 1,2,\ldots}
$$
be a centered Gaussian family satisfying
${\rm E}[g_{i,j,\infty}g_{i',j',\infty}]= \alpha_{i,i',j,j'}$.
By (\ref{nunupec}), (\ref{existence}) and Theorem \ref{PT-thm},
we obtain (possibly, passing to a subsequence) that,  for any fixed $m\ge  1$,
as $n\to\infty$
\begin{equation}\label{PT}
(g_{1,1,n},\ldots,g_{1,m,n},\ldots,
g_{d,1,n},\ldots,g_{d,m,n},{\bf V}_n)
\overset{\rm law}{\to}
(g_{1,1,\infty},\ldots,g_{1,m,\infty},\ldots,
g_{d,1,\infty},\ldots,g_{d,m,\infty},\widehat{\bf g}),
\end{equation}
where $\widehat{\bf g}:=(\widehat{g}_{1},\ldots,\widehat{g}_{d})$ is
centered Gaussian and independent of ${\bf g}_\infty$. Let us now introduce some additional notation.
For any $n,m\ge  1$, set
\begin{eqnarray*}
{\bf U}_n&=&\biggl(\sum_{j=1}^{D_{1,n}} \lambda_{1,j,n}(g_{1,j,n}^2-1),\ldots,
\sum_{j=1}^{D_{d,n}} \lambda_{d,j,n}(g_{d,j,n}^2-1)\biggr),
\\
{\bf V}_n&=&\biggl(\sum_{j=D_{1,n}+1}^\infty \lambda_{1,j,n}(g_{1,j,n}^2-1),\ldots,
\sum_{j=D_{d,n}+1}^\infty \lambda_{d,j,n}(g_{d,j,n}^2-1)\biggr),
\\
{\bf W}_{m,n}&=&\biggl(\sum_{j=1}^{m} \mu_{1,j}(g_{1,j,n}^2-1),
\ldots,\sum_{j=1}^{m} \mu_{d,j}(g_{d,j,n}^2-1)\biggr)\quad\mbox{(here $n=\infty$ is possible)},
\\
{\bf Z}_{\infty}&=&\biggl(\sum_{j=1}^{\infty} \mu_{1,j}(g_{1,j,\infty}^2-1),
\ldots,\sum_{j=1}^{\infty} \mu_{d,j}(g_{d,j,\infty}^2-1)\biggr).
\end{eqnarray*}
As an immediate consequence of (\ref{PT}), we have
\begin{equation}
\lim_{n\to\infty} dist(({\bf W}_{m,n},{\bf V}_n);
({\bf W}_{m,\infty},\widehat{\bf g}))=0
\quad\mbox{for any fixed $m$},
\label{NR}
\end{equation}
where $dist$ stands for any distance that metrizes
convergence in probability (such as the Fortet--Mourier distance for instance).
On the other hand, since
$$
\lim_{n,m\to\infty} {\rm E}[\|{\bf U}_n-{\bf W}_{m,n}\|^2]=0, \quad
\lim_{m\to\infty} {\rm E}[\|{\bf W}_{m,\infty}-{\bf Z}_\infty\|^2]=0,
$$
where the usual norm in $\mathbb{R}^d$ is used,
we have
\begin{equation}\label{lim2}
\lim_{n,m\to\infty}dist(({\bf U}_n,{\bf V}_n);({\bf W}_{m,n},{\bf V}_n))=0, \quad
\lim_{m\to\infty}dist(({\bf W}_{\infty,m},\widehat{\bf g});({\bf Z}_\infty,\widehat{\bf g}))=0.
\end{equation}
By combining (\ref{NR}) and (\ref{lim2}) with
\begin{eqnarray*}
dist(({\bf U}_n,{\bf V}_n);({\bf Z}_\infty,\widehat{\bf g}))&\le&
dist(({\bf U}_n,{\bf V}_n);({\bf W}_{m,n},{\bf V}_n))+dist(({\bf W}_{m,n},{\bf V}_n);({\bf W}_{m,\infty},\widehat{\bf g}))\\
&&+dist(({\bf W}_{m,\infty},\widehat{\bf g});({\bf Z}_\infty,\widehat{\bf g})),
\end{eqnarray*}
we get that
$
\lim\limits_{n\to\infty}dist(({\bf U}_n,{\bf V}_n);({\bf Z}_\infty,\widehat{\bf g}))=0,
$
which yields the desired conclusion.
\end{proof}

It should be noted that Theorem~\ref{main2} generalizes the Arcones theorem to the
multidimensional case, but it is not known whether in the one-dimensional case the set of
distributions of measurable polynomials of a fixed degree $k>2$ is closed.

\bigskip

{\bf Acknowledgement}.
This work was supported by the RFBR projects
12-01-33009  and 11-01-12104-ofi-m, as well as 
the (french) ANR grant `Malliavin, Stein and Stochastic Equations with Irregular
Coefficients' [ANR-10-BLAN-0121].

\end{document}